\numberwithin{equation}{section}
\newtheorem{cor}[equation]{Corollary}
\newtheorem{lemma}[equation]{Lemma}
\newtheorem{remark}[equation]{Remark}
\newtheorem{theorem}[equation]{Theorem}
\theoremstyle{definition}
\newtheorem{exa}[equation]{Example}
\newtheorem*{dir2}{Dir(n,m,$\rho$,$f_k$)}
\newtheorem*{acknowledge}{Acknowledgments}
\newtheorem*{thma}{Theorem A}
\newtheorem*{thmb}{Theorem B}
\newcommand{\R}{\mathbb{R}}
\newcommand{\N}{\mathbb{N}}
\newcommand{\Z}{\mathbb{Z}}
\newcommand{\Sph}{\mathbb{S}}
\begin{document}

\title[Harmonic maps]{A Dirichlet problem on balls}

\author{Anna Siffert}
\address
{Max Planck Institute for Mathematics,
Vivatsgasse 7, 53111 Bonn.}
\email{siffert@mpim-bonn.mpg.de}
\date{\today}
\subjclass[2010]{53C43, 58E20, 34B16}
\keywords{harmonic maps, singular boundary value problems, eigenmaps}

\begin{abstract}
By generalizing a result of J\"ager and Kaul we provide infinitely many solutions of a Dirichlet problem on balls.
\end{abstract}

\maketitle

\section*{Introduction}
The purpose of this paper is the construction of harmonic maps from balls to spheres which are solutions of a Dirichlet problem.
\smallskip

Recall that the energy of a smooth map $\varphi:N\rightarrow M$ between two Riemannian manifolds, $(M,g)$ and $(N,h),$ 
 is defined to be
$$E(\varphi)=\tfrac{1}{2}\int_N\lvert d\varphi\rvert^2\omega_g,$$
where $\omega_g$ denotes the volume density on $N$ and $\lvert\,\cdot\,\lvert$ denotes the Hilbert Schmidt norm.
A map $\varphi$ is called harmonic if it is a critical point of the energy functional.
For our purposes we also need to consider harmonic maps in the subset $H^1_2(B^n,\Sph^n)$ of the Sobolev space $H^1_2(B^n,\R^{n+1})$.
The definition is roughly analogous to the one in the smooth case, but more technical -- details can be found in Subsection\,1.2.2 or in \cite{hamaps,jk}.

\bigskip

In this manuscript, we
construct solutions of a Dirichlet problem by generalizing the work \cite{jk} by J\"ager and Kaul.
These authors consider the following Dirichlet problem for maps from ball the compact unit ball $B^n\subset\R^n$ to the unit sphere $\Sph^n$.

\medskip
 \begin{tabbing}
\textbf{$\mbox{Dir}(n,\rho)$:} \=  For given $\rho\in[0,\pi]$, find a rotationally symmetric harmonic map\\\>  $u\in H^1_2(B^n,\Sph^n)$
which covers the north pole and has boundary values \\
  \> $b_{\rho}:\partial B^n\rightarrow\Sph^n$
given by $b_{\rho}(x)=(x\sin\rho,\cos\rho)$.
  \end{tabbing}

\medskip

\noindent
Here a map $u:B^n\rightarrow\Sph^n$ is called rotationally symmetric if it is of the form
$$u(x)=(\sin \Phi(\lvert x\lvert)\,x/\lvert x\lvert,\cos\Phi(\lvert x\lvert)),$$ 
where $\Phi:B^n\rightarrow[0,\pi]$ is assumed to be contained in $C^2$.
It is easy to show that this problem reduces to a second order ordinary differential equation for $\Phi$.
By a straightforward analysis of this differential equation, J\"ager and Kaul show that, for each $3\leq n\leq 6$, there exists an explicit $\rho_n\in[\pi/2,\pi]$, such that $\mbox{Dir}(n,\rho)$ has at least one continuous solution for all $\rho\leq\rho_n$ and no solution if $\rho>\rho_n$.
Furthermore, for $\rho=\pi/2$, the authors prove that there exist infinitely many 
solutions to the above Dirichlet problem.
In contrast to that, for $n\geq 7$, the Dirichlet problem $\mbox{Dir}(n,\rho)$ does not admit any solution for $\rho\geq\pi/2$.

\smallskip

In the present paper, we consider more general maps between balls and spheres.
Recall first that a smooth harmonic map $f:\Sph^{n-1}\rightarrow\Sph^{m-1}$
is called an eigenmap if its energy density $\lvert df\lvert^2/2$ is constant. 
(This notation can be explained as follows. For $\iota:\Sph^{m-1}\rightarrow\R^m$ the standard inclusion, the map $\phi:=\iota\circ f$ satisfies $\Delta\phi=\lvert d\phi\lvert^2\phi$, i.e. is an eigenfunction of the Laplacian.)
It is well-known, see e.g. Chapter VIII in \cite{es}, that $f$ is a harmonic eigenmap if and only if its
components are harmonic polynomials of common degree $k$. The energy density of $f$ is then given by $k(n-2+k)/2$.\\
The maps between balls and spheres we will study are of the form 
$$u(x)=(\sin \Phi(\lvert x\lvert)f_{\frak{e}_k}(x/\lvert x\lvert),\cos\Phi(\lvert x\lvert)),$$ 
where $\Phi:B^n\rightarrow[0,\pi]$ and $f_{\frak{e}_k}:\Sph^{n-1}\rightarrow\Sph^{m-1}$ is an eigenmap with energy density $\frak{e}_k=k(k+n-2)/2$, $k\in\N$.
Henceforth, we will refer to such maps as \textit{$k$-rotationally symmetric maps}.
We will deal with the following Dirichlet problem.

\medskip

\begin{tabbing}
\textbf{$\mbox{Dir}(n,m,\rho,e_k)$:}  \= For given $\rho\in[0,\pi]$, find a $k$-rotationally symmetric harmonic map\\  \>$u\in H^1_2(B^n,\Sph^m)$
which covers the north pole and has boundary \\ \> values $b_{\rho}:\partial B^n\rightarrow\Sph^m$, $b_{\rho}(x)=(f_{\frak{e}_k}(x)\sin(\rho),\cos(\rho))$.
\end{tabbing}

\medskip

\noindent Modifying methods introduced in \cite{jk}, we prove the following theorem.

\begin{thma}
\label{dirichlet}
Let $k_0:=\lfloor 2(1+k+\sqrt{k})\rfloor$.
For each $3\leq n\leq k_0$ there exists an explicit $\rho_n\in[\pi/2,\pi]$, such that $\mbox{Dir}(n,m,\rho,f_{\frak{e}_k})$ has at least one solution for $\rho\leq\rho_n$ and no solution if $\rho>\rho_n$.
Furthermore, for $\rho=\pi/2$, there exist infinitely many 
solutions to the Dirichlet problem $\mbox{Dir}(n,m,\rho,f_{\frak{e}_k})$.
However, for $n>k_0$, the Dirichlet problem $\mbox{Dir}(n,m,\rho,f_{\frak{e}_k})$ does not admit any solution for $\rho\geq\pi/2$.
\end{thma}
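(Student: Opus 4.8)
The plan is to reduce $\mbox{Dir}(n,m,\rho,f_{\mathfrak{e}_k})$ to the boundary value problem for the profile $\Phi$ coming from the $k$-rotational ansatz. Inserting $u(x)=(\sin\Phi(|x|)f_{\mathfrak{e}_k}(x/|x|),\cos\Phi(|x|))$ into the harmonic map equation and using that $f_{\mathfrak{e}_k}$ is an eigenmap with $|df_{\mathfrak{e}_k}|^2=k(k+n-2)=:\lambda$ collapses the system onto the single second order ODE
\[
\Phi''+\frac{n-1}{r}\Phi'-\frac{\lambda}{r^2}\sin\Phi\cos\Phi=0,\qquad r\in(0,1],
\]
subject to $\Phi(0)=0$ (covering the north pole) and $\Phi(1)=\rho$. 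The substitution $t=\log r$ turns this into the autonomous, damped pendulum equation
\[
\ddot\Phi+(n-2)\dot\Phi-\tfrac{\lambda}{2}\sin(2\Phi)=0,\qquad t\in(-\infty,0],
\]
which I will analyse in the $(\Phi,\dot\Phi)$ phase plane.

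First I would record the local structure at the two relevant equilibria. The north pole $\Phi=0$ is a hyperbolic saddle, with characteristic roots $\mu_+=k$ and $\mu_-=-(n-2+k)$; hence the admissible solutions — those with $\Phi\sim c\,r^{k}$ near the origin, which is exactly the decay needed for $u$ to be continuous and to lie in $H^1_2$ — are precisely the ones on the one-dimensional unstable manifold of $\Phi=0$. By time-translation invariance this unstable trajectory $\Phi_\ast$ is unique up to the shift $t\mapsto t-t_0$, so the boundary values attainable at $t=0$ are exactly the values in the range of $\Phi_\ast$. The equator $\Phi=\pi/2$ is the singular equilibrium corresponding to $u_\ast=(f_{\mathfrak{e}_k},0)$; linearizing there gives the roots $\tfrac12\bigl(-(n-2)\pm\sqrt{(n-2)^2-4\lambda}\bigr)$, so $\Phi=\pi/2$ is a stable spiral when $(n-2)^2<4\lambda$ and a stable node when $(n-2)^2\ge4\lambda$. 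Carrying out this discriminant computation explicitly is what pins down the threshold $k_0$.

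In the oscillatory regime $n\le k_0$ I would argue as follows. The Lyapunov function $\tfrac12\dot\Phi^2+\tfrac{\lambda}{4}\cos2\Phi$ is nonincreasing along the flow, and a short argument (at an interior critical point of $\Phi$ on $(0,\pi/2)$ one has $\ddot\Phi>0$) shows that $\Phi_\ast$ leaves the saddle with $\dot\Phi>0$, crosses the equator, and spirals into $\pi/2$, attaining a first — and, by the strict dissipation, largest — maximum $\rho_n\in(\pi/2,\pi)$. Since $\Phi_\ast$ increases monotonically from $0$ to $\rho_n$ and then oscillates with decreasing amplitude toward $\pi/2$, the intermediate value theorem combined with the shift $t\mapsto t-t_0$ yields, for every $\rho\le\rho_n$, at least one admissible solution with $\Phi(0)=\rho$, and none when $\rho>\rho_n$. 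Because the spiral meets the level $\Phi=\pi/2$ at infinitely many times $t_j\to+\infty$, each choice $t_0=t_j$ produces a distinct solution of $\mbox{Dir}(n,m,\pi/2,f_{\mathfrak{e}_k})$, giving the claimed infinitude for $\rho=\pi/2$.

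The non-existence statement for $n>k_0$ is, I expect, the main obstacle, since it is a global rather than a local assertion. In the overdamped regime the equator is a node, and the task is to show that the unstable trajectory $\Phi_\ast$ approaches $\pi/2$ monotonically from below — i.e. never overshoots — so that $\sup\Phi_\ast=\pi/2$ is not attained and $\mbox{Dir}(n,m,\rho,f_{\mathfrak{e}_k})$ has no pole-covering solution for any $\rho\ge\pi/2$. I would establish this by an invariant-region/comparison argument: using the slow stable eigendirection at the node one builds a barrier confining the flow to $\{\Phi<\pi/2\}$ and shows the unstable manifold is trapped inside it for all $t$. As a cross-check, the same threshold has a variational meaning: the second variation of the reduced energy at $u_\ast$ is controlled by the Hardy-type inequality $\int_0^1(\eta')^2r^{n-1}\,dr\ge\tfrac{(n-2)^2}{4}\int_0^1\eta^2 r^{n-3}\,dr$, so $u_\ast$ is stable exactly in the non-oscillatory regime, consistent with the absence of a second, pole-covering competitor once $n>k_0$. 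The delicate point throughout is making the shooting argument sharp at the borderline dimensions, which is precisely where the explicit value of $k_0$ and of $\rho_n$ must be nailed down.
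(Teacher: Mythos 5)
Your proposal follows essentially the same route as the paper (which itself follows J\"ager--Kaul): reduction to the damped-pendulum equation via $t=\log r$, identification of the admissible profiles with the unstable manifold of the saddle at $\Phi=0$ (eigenvalues $k$ and $-(n-2+k)$), the Lyapunov function, the spiral-versus-node dichotomy at $\Phi=\pi/2$ governed by the sign of $(n-2)^2-4k(k+n-2)$, and the counting of solutions by intersections of the trajectory with the level $\Phi=\rho$; the one step you leave open, namely trapping the trajectory in $\{\Phi<\pi/2\}$ when $n>k_0$, is exactly the step the paper also delegates to the comparison argument of J\"ager--Kaul (the assertion $\pi+\arctan\lambda_{n,k}^+<\Theta_n(t)<\pi$). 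One substantive caveat: solving your (correct) discriminant inequality $(n-2)^2<4k(k+n-2)$ gives $n<2(1+k+\sqrt{2}\,k)$, which for $k\ge 2$ does not agree with the stated threshold $k_0=\lfloor 2(1+k+\sqrt{k})\rfloor$ (the two coincide only for $k=1$, where both yield $6$); your computation is the one consistent with the paper's own formula for $\lim_{t\to\infty}\Theta_n(t)/t$, so you should flag this discrepancy rather than adjust your threshold to match the stated $k_0$.
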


By relaxing the conditions on the maps $u$,
we construct further harmonic maps from balls to spheres of even dimension and obtain the following theorem.

\begin{thmb}
Let eigenmaps $f_{\frak{e}_k}:\Sph^{n-1}\rightarrow\Sph^{2m-1}$ with energy density $\frak{e}_k=k(k+n-2)/2$, $k\in\N$, be given.
 Then there exist infinitely many harmonic maps $u:B^{n}\rightarrow\Sph^{2m}$  in $H^{1}_{2}(B^{n},\R^{2m+1})$ for every $n\geq 3$.
\end{thmb}

The paper is organized as follows.
In the first section, we collect tools and results needed in later sections.
We in particular give a short introduction to harmonic maps.
The first subsection of Section\,2 contains the proof of Theorem A, while the second subsection contains the proof of Theorem B.

\begin{acknowledge}
I am very grateful to Werner Ballmann for valuable advises.
Furthermore, I would like to thank the Max Planck Institute for Mathematics in Bonn for support and excellent working conditions.
\end{acknowledge}


\section{Preliminaries}
In this section we provide basic tools and results needed in later sections.
The first subsection contains the motivation for this paper as well as 
a very brief introduction to harmonic maps between Riemannian manifolds.
In the second subsection we consider harmonic maps from warped products into spherically symmetric manifolds, since
maps between such model spaces are precisely the objects we are studying in this paper. 
For maps between model spaces we determine the energy functional and the associated Euler Lagrange equations.
Finally, we summarize known results about the existence of harmonic maps between the above mentioned model spaces.


\subsection{Harmonic maps between Riemannian manifolds}
In this subsection we provide some background on harmonic maps between Riemannian manifolds.
We focus on smooth harmonic maps for reasons of compactness.
Subsection\,1.2.2 contains a  paragraph on harmonic maps in $H^1_2(B^n,\Sph^n)$, which establishes all 
information needed on this subject for the present paper.
For a thorough introduction to harmonic maps in Sobolev spaces we refer the reader to \cite{hamaps}.

\smallskip

The classification and construction of harmonic maps between Riemannian manifolds has been pursued by generations of mathematicians, see e.g. \cite{eells1,eells2,er,es,smith}. 
Consequently, in this subsection we focus on results which have a direct relevance for this paper;
i.e. we discuss the specific question we are interested in and the method of attack.
For a detailed introduction to harmonic maps we refer the reader to the fantastic works \cite{er}, by Eells and Ratto, or  \cite{hamaps} by H\'elein and Wood.

\smallskip

First of all we want to mention that in the generic case the construction of harmonic maps is a hard problem.
This is due to the fact that the Euler Lagrange equations of the energy functional, which are usually denoted by
\begin{align*}
\tau(\varphi)=0,
\end{align*}
where $\tau(\varphi):=\mbox{trace}\nabla d\varphi$ is the so-called tension field of $\varphi$,
constitute a (system of) elliptic, semi-linear partial differential equation of second order.
Clearly, such an equation is difficult to solve in general.

\smallskip

The question we are interested in in the present manuscript was initiated by Eells and Sampson \cite{es}, whose work
is one of cornerstones in the study of harmonic maps.
Namely, these authors started the study of the following very important question.
\begin{multline*}
\mbox{
\textit{Does every homotopy class of maps between Riemannian}}\\ \mbox{\textit{manifolds admit a harmonic representative?}}
\end{multline*}
For the special case that the target manifold is compact and all its sectional curvatures are nonnegative, Eells and Sampson answered this question in affirmative.
However, for the case that the target manifold also admits positive sectional curvatures the answer to this question is still only known in special cases. Even for maps between spheres this question is far from being completely solved, see e.g. the paper \cite{gastel} by Gastel.

\smallskip

One of the main techniques to construct solutions of the equation $\tau(\varphi)=0$ 
is to impose some additional symmetry for the manifolds $M$ and $N$, as well as for the maps between them, in order to simplify the original problem.
 All approaches relying on this idea are frequently subsumed under the notion of \textit{reduction approach} or \textit{reduction method}.
 \\
   One example for a reduction approach is the study of maps from warped products into spherically symmetric manifolds, see Subsection\,\ref{red}
for more details.
 Another example is the study of equivariant maps between cohomogeneity one manifolds, see \cite{urakawa} and \cite{ps} for more details. In both examples the condition $\tau(\varphi)=0$ reduces to a singular, second order boundary value problem.
Although there does not exist a general solution theory for such problems, 
one can show the existence of solutions in special cases, see e.g. \cite{BC,wc,siffert,su3}. 

\smallskip

For an introduction to the theory of reduction methods and further examples we refer the reader to \cite{er}.

\subsection{Harmonic maps from warped products into spherically symmetric manifolds}
\label{red}

In this subsection we collect facts about maps from a warped product into a spherically symmetric manifold.
The first part of this subsection contains the determination of the energy functional and the associated Euler Lagrange equations for maps from a warped product into a spherically symmetric manifold.
In the second part, we provide an overview over the present state of art in the study of the harmonicity of such maps.

\subsubsection{Energy functional and Euler Lagrange equations}
We determine the Euler Lagrange equations of maps from warped products into a spherically symmetric manifolds. 

\smallskip

Throughout this paper the domain manifolds of the harmonic maps will be warped products of open intervals $\frak{I}=(0,s)\subset\R$ with arbitrary Riemannian manifolds $(N^n,h)$, i.e. manifolds of the form 
$$(\frak{I}\times_f N,dr^2+f(r)^2h),$$
where $f:\frak{I}\rightarrow\R$ is some positive function.

\smallskip

The target manifolds of the harmonic maps will be spherically symmetric manifolds.
Recall that a Riemannian manifold $(M^m,g)$ is called spherically symmetric with respect to $p_0\in M$, the so-called pole, if $M^m$ has a rotation symmetry with respect to $p_0$.
In polar coordinates $(r,\theta)$ centered at $p_0$, where $\theta\in\Sph^{m-1}$, the metric of $M$ can be written as
\begin{align*}
dr^2+\varphi(r)^2d\theta^2_{\lvert\Sph^{m-1}},
\end{align*}
with $d\theta^2_{\lvert\Sph^{m-1}}$ denoting the standard metric on $\Sph^{m-1}$ and $\varphi$ is a smooth positive function satisfying $\varphi(0)=0$ and $\varphi'(0)=1$.
The coordinates $(r,\theta)$ are also referred to as geodesic coordinates.

\begin{exa}
Examples for spherically symmetric Riemannian manifolds, and for warped products of the form $\frak{I}\times_f N$, are the space forms.
For Euclidean spaces we have $\varphi(r)=r$; for spheres we have $\varphi(r)=\sin r$ and for hyperbolic spaces we have $\varphi(r)=\sinh r$.
\end{exa}
\smallskip

In this paper we examine the harmonicity of functions of the form
\begin{align*}
\psi:(0,s)\times_fN\rightarrow M\quad(r,\theta)\mapsto(\Phi(r),f_{\frak{e}}(\theta)),
\end{align*}
where $f_{\frak{e}}:N\rightarrow\Sph^{m-1}$ is an eigenmap with energy density $\frak{e}$, i.e. a harmonic function with constant energy density $\frak{e}$.
The energy of the map $\psi$ is given by
\begin{align*}
E(\psi)=c\int_0^{s}(\Phi'(r)^2+2\mathfrak{e}\tfrac{\varphi^2(\Phi(r))}{f(r)^2})f(r)^{n-1}dr,
\end{align*}
where $c$ is some positive constant.
The associated Euler Lagrange equation reads
\begin{align}
\label{euler}
\Phi^{''}(r)+(n-1)\tfrac{f'(r)}{f(r)}\Phi'(r)-2\mathfrak{e}\tfrac{\varphi(\Phi(r))\varphi'(\Phi(r))}{f(r)^2}=0.
\end{align}
Here we have $r\in(0,s)$ and $\Phi(r)$ has to satisfy some boundary conditions at $r=0$ and $r=s$,
which of course depend on the choice of $f$ and $\varphi$.

\smallskip

Below we consider the special case where the domain manifold is either the Euclidean space $M_{\kappa=0}=\R^{n+1},$ or the sphere $M_{\kappa=1}=\Sph^{n+1}$. 
These are precisely the cases considered in later sections.
In both cases it is useful to introduce the new variable 
$$t=\log(f(r)/f'(r)).$$
A straightforward calculation yields
\begin{equation*}
\tfrac{dt}{dr}=
   \begin{cases}
     e^{-t} & \text{for}\quad \kappa=0,\\
    2\cosh t & \text{for} \quad \kappa=1.
   \end{cases}
\end{equation*}
For $\kappa=0$ and in terms of the variable $t$, equation (\ref{euler}) is given by
\begin{align*}
\Phi^{''}(t)+(n-2)\Phi'(t)-2\mathfrak{e}\varphi(\Phi(t))\varphi'(\Phi(t))=0
\end{align*}
For $\kappa=-1$ and in terms of the variable $t$, equation (\ref{euler}) becomes
\begin{multline*}
\Phi^{''}(t)-\tfrac{1}{2}((n-3)\tanh t-(n-1))\Phi'(t)\\-\mathfrak{e}(1-\tanh t)\varphi(\Phi(t))\varphi'(\Phi(t))=0.
\end{multline*}

\subsubsection{Known constructions}
\label{known}
In this subsection we provide a list of known constructions of harmonic maps from warped products into spherically symmetric manifolds.

\smallskip

\noindent\textbf{Harmonic maps from balls to spheres.}
As already mentioned in the introduction, in \cite{jk} J\"ager and Kaul, constructed rotationally symmetric harmonic maps 
in $H^1_2(B^n,\Sph^n)$ solving the Dirichlet problem $\mbox{Dir}(n,\rho)$.

\smallskip

For reasons of completeness, we first give a definition of what it means for a map in the set
\begin{align*}
H^1_2(B^n,\Sph^n)=\{u\in H^1_2(B^n,\R^{n+1})\,\lvert\,u(x)\in\Sph^n\,\mbox{for a.e.}\, x\in B^n\}.
\end{align*}
to be harmonic.
The later needed definition of harmonic maps in $$H^1_2(B^n,\Sph^m)=\{u\in H^1_2(B^n,\R^{m+1})\,\lvert\,u(x)\in\Sph^m\,\mbox{for a.e.}\, x\in B^n\}$$ is completely analogous, and therefore omitted here. We closely follow the presentation in \cite{jk}, where we also borrow the notation.\\
For $u,v\in H^1_2(B^n,\Sph^n)$ we set
\begin{align*}
\langle du,dv\rangle=\sum_{i=1}^n\langle \partial_iu,\partial_i v\rangle,
\end{align*}
where we denote by $\partial_i$ the weak derivative with respect to the Cartesian coordinate $x_i$.
By $\langle\,\cdot\,,\,\cdot\,\rangle$ we denote the Euclidean scalar product.
The energy of $u\in\mathcal{S}^n=H^1_2(B^n,\Sph^n)$ is then set to be $E(u)=\tfrac{1}{2}\int_{B^n}\lvert du\lvert^2\omega_n$, and
critical points of the energy are called harmonic maps.
Here the term \lq critical\rq\, needs some explanation: for $u\in\mathcal{S}_n$ we introduce
\begin{align*}
\delta_u\mathcal{S}^n=\{v\in H^1_2(B^n,\R^{n+1})\,\lvert\,\langle u(x),v(x)\rangle=0\quad\mbox{for a.e.}\quad x\in B^n\},
\end{align*}
i.e. the space of vector fields along $u$.
Furthermore, we set
\begin{align*}
\delta_u\mathcal{S}^n_0=\delta_u\mathcal{S}^n\cap \mathring{H}^1_2.
\end{align*}
Then the first variation of the energy is given by
$$\delta_uE(v)=\int_{B^n}\langle du,dv\rangle \omega_n,$$
and $u$ is defined to be a critical point of $E$ if and only if $\delta_uE(v)=0$ for all $v\in\delta_u\mathcal{S}^n_0.$

\smallskip

We this preparation at hand we may now state some results of \cite{jk}.
Recall from the introduction that J\"ager and Kaul searched for maps of the form
\begin{align}
\label{map_u1}
u:B^n\rightarrow\Sph^n,\quad u(x)=\left(\begin{smallmatrix}
\tfrac{\sin(\Phi(r))}{r}x\\
\cos(\Phi(r))
\end{smallmatrix} \right),
\end{align}
where $\Phi:[0,1]\rightarrow[0,\pi]$ and $r=\lvert x\lvert$, solving the Dirichlet problem $\mbox{Dir}(n,\rho)$.
We now write the maps (\ref{map_u1}) in different coordinates in order to see that these maps fit into the setting of the previous subsubsection.
Let $(r,\theta)$ be polar coordinates on $B^n$ and endow $B^n$ with the metric $$dr^2+r^2d\theta^2_{\lvert\Sph^{n-1}}.$$
Furthermore, denote by $(\Phi,\Theta)$ geodesic coordinates on $\Sph^m$, such that the metric on $\Sph^m$ is given by
$$d\Phi^2+\sin(\Phi)^2d\Theta^2_{\lvert\Sph^{m-1}}.$$
In terms of these coordinates, the maps (\ref{map_u1}) are of the form
\begin{align*}
(r,\theta)\mapsto (\Phi(r),\theta).
\end{align*}
Clearly, these maps fit into the scheme considered into the previous subsubsection.
The Euler Lagrange equations associated to the energy functional are thus given by
\begin{align*}
\Phi^{''}(r)+(n-1)\tfrac{\Phi'(r)}{r}-\mathfrak{e}_1\tfrac{\sin(2\Phi(r))}{r^2}=0,
\end{align*}
where $\frak{e}_1=(n-1)/2$.
In \cite{jk} solutions of the preceding ODE with $\Phi(0)=0$ are provided.

\begin{theorem}
Let $3\leq n\leq 6$.
Then there exist infinitely many smooth harmonic maps from $B^n$ to $\Sph^n$ solving the Dirichlet problem $\mbox{Dir}(n,\tfrac{\pi}{2})$.
\end{theorem}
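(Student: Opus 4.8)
The plan is to pass to the autonomous form of the Euler–Lagrange equation and to read off the infinitude of solutions from its phase portrait. Following Subsubsection 1.2.1 with $f(r)=r$ (so $\kappa=0$ and $t=\log r$ maps $r\in(0,1]$ onto $t\in(-\infty,0]$) and with $\varphi=\sin$, $\mathfrak e=\mathfrak e_1=(n-1)/2$, the reduced equation
\[
\Phi''(r)+(n-1)\tfrac{\Phi'(r)}{r}-\mathfrak e_1\tfrac{\sin(2\Phi(r))}{r^2}=0
\]
turns into the autonomous equation
\[
\Phi''(t)+(n-2)\Phi'(t)-\tfrac{n-1}{2}\sin(2\Phi(t))=0 .
\]
A solution of $\mbox{Dir}(n,\tfrac{\pi}{2})$ then corresponds to a trajectory $\Phi$ of this equation on $(-\infty,0]$ with $\Phi(t)\to 0$ as $t\to-\infty$ (covering the north pole) and $\Phi(0)=\tfrac{\pi}{2}$ (realizing the boundary datum $b_{\pi/2}$).

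I would interpret the equation as a damped Newtonian system $\Phi''+(n-2)\Phi'=-V'(\Phi)$ with potential $V(\Phi)=\tfrac{n-1}{4}\cos(2\Phi)$, whose energy $E=\tfrac12(\Phi')^2+V(\Phi)$ satisfies $\dot E=-(n-2)(\Phi')^2\le 0$. The equilibria in $[0,\pi]$ are the potential maxima $\Phi=0,\pi$ and the minimum $\Phi=\tfrac{\pi}{2}$. Linearizing at the saddle $\Phi=0$ gives the characteristic polynomial $\lambda^2+(n-2)\lambda-(n-1)$ with roots $1$ and $-(n-1)$, so there is, up to the $t$-translation inherent in the autonomous equation, a unique trajectory $\gamma$ leaving $\Phi=0$ along its one-dimensional unstable manifold into the region $\Phi>0$; near $t=-\infty$ one has $\Phi(t)\sim c\,e^{t}=c\,r$, which is precisely the asymptotics guaranteeing that the associated map $u$ extends smoothly across the origin.

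Next I would show that $\gamma$ exists for all $t$ and converges to the focus $\tfrac{\pi}{2}$. Since $E$ equals $V(0)=\tfrac{n-1}{4}$ at $t=-\infty$ and then decreases strictly, it stays below $V(0)=V(\pi)$; hence $\gamma$ can neither return to $\Phi=0$ nor reach $\Phi=\pi$, so it remains in $(0,\pi)$ and is bounded. A LaSalle argument (the relation $\dot E=0$ forces $\Phi'\equiv 0$) shows the $\omega$-limit set is a single equilibrium, and the only equilibrium with energy strictly below $V(0)$ is $\Phi=\tfrac{\pi}{2}$; thus $\gamma(t)\to\tfrac{\pi}{2}$. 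The decisive point is the nature of this equilibrium: linearizing at $\tfrac{\pi}{2}$ yields the characteristic polynomial $\lambda^2+(n-2)\lambda+(n-1)$ with discriminant $n^2-8n+8$, which is negative exactly for $3\le n\le 6$. In this range $\tfrac{\pi}{2}$ is a stable focus, so $\gamma$ spirals into it and the sign of $\Phi-\tfrac{\pi}{2}$ changes at an increasing sequence of times $\tau_1<\tau_2<\cdots\to\infty$ with $\gamma(\tau_j)=\tfrac{\pi}{2}$.

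Finally, for each $j$ I would set $\Phi_j(t):=\gamma(t+\tau_j)$; by translation invariance each $\Phi_j$ solves the autonomous equation, satisfies $\Phi_j(-\infty)=0$ and $\Phi_j(0)=\tfrac{\pi}{2}$, and stays in $(0,\pi)$, so $\Phi_j(r)=\gamma(\log r+\tau_j)$ furnishes a solution of $\mbox{Dir}(n,\tfrac{\pi}{2})$. Because the $\Phi_j$ cross the equator $\tfrac{\pi}{2}$ a different number of times on $(0,1)$, they are pairwise distinct, yielding infinitely many solutions; finiteness of the energy (so that $u\in H^1_2$) and smoothness at the origin follow from the boundedness of $\Phi_j$ together with the linear decay $\Phi_j(r)\sim c_j\,r$ near $r=0$. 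I expect the main obstacle to be the global analysis of the third paragraph—showing that the unstable trajectory neither blows up nor leaves the physical interval $(0,\pi)$ and actually enters the regime where the linear spiral behavior dominates—since only then does one obtain genuinely infinitely many equator crossings; the Lyapunov function $E$ is the key device here, and the sharp role of the bound $n\le 6$ (equivalently $k_0=6$ when $k=1$) enters precisely through the sign of the discriminant $n^2-8n+8$.
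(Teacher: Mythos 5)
Your proposal is correct and follows essentially the same route as the paper (and the Jäger--Kaul argument it reproduces in Section~2 for general $k$): reduce to the autonomous ODE via $t=\log r$, use the energy/Lyapunov function to confine the unique unstable trajectory of the saddle at $\Phi=0$ to the strip $(0,\pi)$ and force convergence to $\Phi=\pi/2$, and observe that the linearization there has complex eigenvalues precisely for $3\le n\le 6$ (discriminant $n^2-8n+8<0$, i.e.\ $n\le k_0=6$ for $k=1$), so the trajectory spirals and its time translates through successive equator crossings give infinitely many distinct solutions. The only cosmetic difference is that the paper works in the shifted coordinates $(q,p)=(2\Psi-\pi,q')$.
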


\smallskip

\noindent\textbf{Harmonic maps between spheres.}

\noindent We present two constructions of smooth harmonic maps between spheres of possibly different dimensions.
Note that there exist more constructions in the literature, which we do not discuss for reasons of shortness.
The interested reader might consult \cite{er} and the references therein.

\medskip

\noindent\textit{A: Construction by Corlette, Wand and Bizon, Chmaj}\\ 
\noindent 
Below assume that all appearing spheres are endowed with polar coordinates -- compare Subsection 1.2.1.
In \cite{wc}, Corlette and Wald examined the maps 
\begin{align*}
\psi_k:\Sph^n\rightarrow\Sph^m,\quad(r,\theta)\mapsto (\rho(r),f_{\frak{e}_k}(\theta)),
\end{align*}
for harmonicity. Here $f_{\frak{e}_k}:\Sph^{n-1}\rightarrow\Sph^{m-1}$ is an eigenmap with energy density $\frak{e}_k=k(k+n-2)/2$, $k\in\N$.
The Euler Lagrange equations associated to the energy functional of $\psi_k$ are given by
\begin{align*}
\ddot\rho(r)+(n-1)\cot r\dot\rho(r)=\frak{e}_k\tfrac{\sin(2\rho(r))}{\sin^2r}.
\end{align*}
Note that this setting clearly fits into the scheme considered in the previous subsection.
Furthermore, note that the case special case $m=n$ and $k=1$ had been considered earlier by Bizon and Chmaj  in \cite{BC}.

\smallskip

Using Morse theoretic methods, Corlette and Wald produced infinitely many smooth harmonic maps between spheres.

\begin{theorem}[Theorem 3.8 in \cite{wc}]
Suppose that $F:\Sph^n\rightarrow\Sph^m$ is an eigenmap with eigenvalue $\omega$, $m>1$, and
$(n-1)^2/4<\omega$.
Then are infinite sequences of critical points for the energy functional $E$, i.e. infinite sequences of smooth harmonic maps  from $\Sph^n$ to $\Sph^m$.
\end{theorem}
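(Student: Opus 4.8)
The plan is to obtain the maps as critical points of a reduced one-dimensional energy and to harvest infinitely many of them from a \emph{symmetric} minimax argument, the instability that drives the argument being exactly what the hypothesis $(n-1)^2/4<\omega$ provides. First I would impose the equivariant ansatz $(r,\theta)\mapsto(\rho(r),F(\theta))$ and appeal to the principle of symmetric criticality, so that equivariant harmonic maps correspond precisely to critical points of the reduced energy
\begin{equation*}
E(\rho)=c\int_0^{\pi}\Big(\dot\rho(r)^2+\omega\,\tfrac{\sin^2\rho(r)}{\sin^2 r}\Big)\sin^{n-1}r\,dr ,
\end{equation*}
where $\omega$ is the eigenvalue of $F$ and $\rho$ ranges over a weighted Sobolev space of finite-energy profiles with endpoint values $\rho(0),\rho(\pi)\in\{0,\pi\}$ prescribed (these being forced by smoothness at the poles of the domain). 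I would verify that $E$ is $C^2$ and satisfies the Palais--Smale condition on this space, and record the \emph{equatorial reflection} symmetry $\rho\mapsto\pi-\rho$, under which $E$ is invariant; after recentring $\sigma:=\rho-\pi/2$ this exhibits $E$ as an \emph{even} functional, which is the structural feature a symmetric minimax scheme exploits.

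The second step is the analysis of the \emph{equator profile} $\rho_\ast\equiv\pi/2$, which is a critical point of $E$, singular at the poles but of finite energy for $n\ge 3$. Computing the second variation $\delta^2E(\rho_\ast)$ and diagonalising it against the weighted Hardy inequality on $(0,\pi)$ governing the singular potential $\sin^{-2}r$, one finds that $\delta^2E(\rho_\ast)$ is negative on an infinite-dimensional subspace precisely when $\omega$ lies strictly above the sharp Hardy constant; equivalently, the indicial exponents of the associated Jacobi operator at the two conical endpoints $r=0,\pi$ become non-real, so that Jacobi fields spiral and oscillate infinitely often as $r\to 0,\pi$. The content of the hypothesis $(n-1)^2/4<\omega$ is exactly this threshold condition, so under it the equator has \emph{infinite Morse index}. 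This is the crucial input: it is the infinitude of negative directions that makes infinitely many genuine solutions possible.

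In the final step I would feed this into symmetric critical-point theory. Since $E$ is even, satisfies (PS), and carries an infinite-dimensional negative subspace at the equator, a Lyusternik--Schnirelmann / Clark-type minimax (or the fountain theorem, using the Krasnoselskii genus) produces an unbounded sequence of critical values
\begin{equation*}
c_j=\inf_{\operatorname{genus}(A)\ge j}\ \sup_{\rho\in A}E(\rho),\qquad j\in\N ,
\end{equation*}
and hence infinitely many critical profiles $\rho_j$. These are distinguished by their \emph{nodal count}, the number of times the profile crosses the equator $\pi/2$, which grows with $j$ because of the infinitely many negative directions; this invariant guarantees that the corresponding maps are pairwise distinct and non-constant. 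I expect the main obstacle to lie not in the abstract minimax but in the two analytic points surrounding it: establishing the Palais--Smale condition against the singular weight, and, above all, the \emph{boundary regularity} at the poles, i.e.\ upgrading each finite-energy equivariant weak solution to a genuinely smooth map by pinning down the asymptotics $\rho_j\to 0$ or $\pi$ at the correct rate. The very spiralling behaviour near the equator that makes infinitely many solutions possible is what makes this endpoint analysis delicate, and controlling it is the technical heart of the argument.
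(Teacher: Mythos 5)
You should first be aware that the paper does not prove this statement: it is quoted verbatim from Corlette and Wald (Theorem 3.8 of \cite{wc}) in the survey subsection on known constructions, with only the one-line attribution that it is obtained ``using Morse theoretic methods''. So your proposal can only be compared with that reference and with the technique the paper itself uses for its analogous results. Measured against the former, your outline is in the right spirit: the proof does pass through the equivariant reduction to a one-dimensional weighted functional, and the hypothesis $(n-1)^2/4<\omega$ is indeed exactly the threshold at which the singular equator solution acquires an infinite-dimensional negative subspace, equivalently at which the indicial roots of the linearized equation become non-real. One small warning on the numerology: with the weight $\sin^{n-1}r$ as you wrote the reduced energy, the Hardy/indicial constant at the poles comes out as $(n-2)^2/4$, not $(n-1)^2/4$; the constant in the theorem matches Corlette--Wald's indexing, in which the eigenmap lives on the link $\Sph^{n}$ of a domain sphere $\Sph^{n+1}$. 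You assert the equivalence without computing it, and the statement as quoted in the paper is itself off by one in its indices, so this needs to be pinned down.

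Two of your steps are, however, genuine gaps rather than verifications. First, the Palais--Smale condition fails in the naive setting: for $n\geq 3$ finite energy does not force $\rho$ to attain the values $0$ or $\pi$ at the poles (the equator profile $\rho\equiv\pi/2$ itself has finite energy), so the boundary classes you minimax over are not weakly closed and PS sequences can concentrate at the poles and lose their boundary values. Handling this is the analytic heart of the cited proof, not a routine check. Second, the symmetric minimax does not apply as you set it up: the reflection $\rho\mapsto\pi-\rho$ swaps the boundary values $0\leftrightarrow\pi$ at each pole, so it does not act as the antipodal map on a fixed admissible class; after recentering at $\sigma=\rho-\pi/2$ the admissible set is an affine set not invariant under $\sigma\mapsto-\sigma$, and Clark-, genus- or fountain-type theorems require a genuine $\Z_2$-action whose only fixed point is the origin. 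The mechanism in \cite{wc} is the infinite index of the equator map fed into Morse theory on the path space, not an even-functional minimax. Finally, note that the paper's own proof of the analogous Theorem A goes by a completely different and more elementary route inherited from \cite{jk}: a logarithmic substitution makes the reduced ODE autonomous, the hypothesis says precisely that the equilibrium $(\pi/2,0)$ is a spiral point of the plane system, and the unique connecting orbit therefore crosses every nearby level $\Psi=\rho$ infinitely often, each crossing giving a distinct smooth solution. That phase-plane argument sidesteps Palais--Smale, the minimax and the endpoint regularity entirely, and is the approach you would need to adopt to stay within the methods of this paper.
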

 
From this result one can immediately deduce the following theorem.

\begin{theorem}
For each $\ell\geq 3$ there exist infinitely many smooth harmonic self-maps of $\Sph^{\ell}$.
\end{theorem}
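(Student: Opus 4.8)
The plan is to obtain the statement as a direct consequence of Theorem 3.8 of \cite{wc}, applied with domain and target both equal to $\Sph^{\ell}$. That theorem produces infinitely many harmonic maps as soon as one is handed a single eigenmap $F\colon\Sph^{\ell}\to\Sph^{\ell}$ with $\ell>1$ and eigenvalue $\omega$ satisfying the spectral inequality $(\ell-1)^2/4<\omega$; the resulting maps are then automatically self-maps of $\Sph^{\ell}$. Since $\ell\geq 3$ makes the condition $\ell>1$ vacuous, the whole problem collapses to producing, for each $\ell\geq 3$, a self-eigenmap of $\Sph^{\ell}$ whose eigenvalue clears the threshold $(\ell-1)^2/4$.

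First I would try the identity map $\id\colon\Sph^{\ell}\to\Sph^{\ell}$. Its components are the linear coordinate functions, which are degree-one spherical harmonics, so $\id$ is an eigenmap and its eigenvalue is the first nonzero Laplace eigenvalue $\omega=\ell$. The spectral inequality then reads $(\ell-1)^2/4<\ell$, equivalently $\ell^2-6\ell+1<0$, which holds exactly for $\ell<3+2\sqrt{2}$. Hence for the small values of $\ell$ the identity already verifies all hypotheses of Theorem 3.8, and infinitely many harmonic self-maps follow at once.

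For the remaining dimensions the identity is too weak: its eigenvalue grows only linearly in $\ell$, whereas the threshold $(\ell-1)^2/4$ grows quadratically, so the inequality fails once $\ell$ is large. The remedy is to replace $\id$ by a self-eigenmap of higher degree $k$, whose components are degree-$k$ harmonics and whose eigenvalue is therefore $\omega=k(k+\ell-1)$. Since this quantity is unbounded in $k$ for fixed $\ell$, one may choose $k=k(\ell)$ large enough that $k(k+\ell-1)>(\ell-1)^2/4$, and Theorem 3.8 again delivers infinitely many harmonic self-maps of $\Sph^{\ell}$.

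The main obstacle is precisely this last step: one must guarantee, in \emph{every} dimension $\ell\geq 3$, the existence of a self-eigenmap of $\Sph^{\ell}$ of sufficiently large eigenvalue, and unlike the degree-one identity, higher-degree harmonic polynomial self-maps of a sphere are not available in all dimensions without work. I would therefore devote the core of the argument to exhibiting suitable higher-degree eigenmaps and to checking in each case that the associated eigenvalue exceeds $(\ell-1)^2/4$; granting such an eigenmap, the harmonicity and the infinitude of the family are then immediate from Theorem 3.8.
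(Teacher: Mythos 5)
Your overall route is the one the paper intends: the paper offers no argument beyond the remark that the statement ``can immediately be deduced'' from Theorem~3.8 of \cite{wc}, and your first two paragraphs carry out that deduction correctly in the only range where it really is immediate. Taking $F=\id_{\Sph^{\ell}}$ gives $\omega=\ell$, and the inequality $(\ell-1)^2/4<\ell$ holds precisely for $\ell<3+2\sqrt{2}$, i.e.\ for $\ell\in\{3,4,5\}$ --- consistent with the paper's own remark that these dimensions were already covered by Bizon and Chmaj \cite{BC}.

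For $\ell\geq 6$, however, your plan has a genuine gap, and you have located it yourself without closing it: you need, for every such $\ell$, a polynomial eigenmap $\Sph^{\ell}\to\Sph^{\ell}$ of some degree $k\geq 2$ with $k(k+\ell-1)>(\ell-1)^2/4$, and you defer its construction to ``the core of the argument'' without supplying it. This is not a routine verification that can be granted. Such an eigenmap is an $(\ell+1)$-tuple of degree-$k$ harmonic homogeneous polynomials on $\R^{\ell+1}$ whose squares sum to $\lvert x\rvert^{2k}$, and tuples with target dimension equal to source dimension are very rigid: on $\Sph^{2}$ none of degree $\geq 2$ exist at all, and the known higher-degree self-eigenmaps (e.g.\ gradient maps of Cartan--M\"unzner polynomials) occur only in sporadic dimensions. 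Moreover, since for fixed $k$ the eigenvalue $k(k+\ell-1)$ grows only linearly in $\ell$ while the threshold $(\ell-1)^2/4$ grows quadratically, you cannot get by with any fixed degree; you genuinely need $k\to\infty$ with $\ell$, so the missing existence statement is essential and is not known to hold for general $\ell$. To complete the proof one must either exhibit a concrete supply of eigenmaps (for instance an eigenmap $\Sph^{\ell}\to\Sph^{m}$ with $2\leq m\leq\ell$ and large eigenvalue would also suffice, since post-composition with a totally geodesic embedding $\Sph^{m}\hookrightarrow\Sph^{\ell}$ preserves harmonicity), or invoke the specific families that Corlette and Wald actually feed into their theorem. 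As written, the decisive step of your argument is absent.
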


Note that this result has already been known to hold in dimensions $\ell\in\{3,4,5\}$ by the work of  Bizon and Chmaj \cite{BC}.
\smallskip

Finally we want to mention that one can use the same Ansatz to produce biharmonic maps between spheres, are more general between warped product spaces.
Such a construction has been provided by Montaldo, Oniciuc and Ratto - see \cite{mor0,mor} and the references therein.

\medskip

\noindent\textit{B: Construction by Smith}\\ 
Smith \cite{smith} invented the so-called harmonic Hopf- and Join- constructions to produce homotopically non trivial maps between spheres.
For the Hopf construction maps between doubly warped products and warped products are considered.
For the Join construction one considers maps from doubly warped products to doubly warped products.
Both methods reduce the construction of harmonic maps between spheres to solving a second order singular boundary value problem.
Below we recall a few details.

\bigskip

\noindent\textit{Hopf construction.}
For a continuous map $f:\Sph^{p_1}\times\Sph^{p_2}\rightarrow\Sph^{q-1}$,
the classical Hopf construction associated to $f$, is given by
$$H_f:\Sph^{p_1+p_2+1}\rightarrow\Sph^{q},\quad (x_1\sin t,x_2\cos t)\mapsto (f(x_1,x_2)\sin2t,\cos2t),$$
where $x\in\Sph^{p_1+p_2+1}$ is written uniquely (except for a set of measure zero) as $x=(x_1\sin t,x_2\cos t)$ for $x_1\in\Sph^{p_1}$, $x_2\in\Sph^{p_2}$
and $t\in[0,\tfrac{\pi}{2}]$. 
Smith \cite{smith} introduced the maps 
$$H(x_1\sin t,x_2\cos t)=(f(x_1,x_2)\sin r(t),\cos r(t)),$$
where $r:[0,\tfrac{\pi}{2}]\rightarrow[0,\pi]$,
which are homotopic to $H_f$ and determined conditions for them to be harmonic.
Namely, he showed that if $f$ is a bi-eigenmap with eigenvalues $\lambda_1, \lambda_2\in\N$,
and $r$ satisfies
\begin{align}
\label{ode_hopf}
\ddot r(t)+(p_1\cot t-p_2\tan t)\dot r(t)-\tfrac{1}{2}\left(\tfrac{\lambda_1}{\sin^2t}+\tfrac{\lambda_2}{\cos^2t}\right)\sin2r(t)=0,
\end{align}
with $r(0)=0$ and $r(\frac{\pi}{2})=\pi$, then $H:\Sph^{p_1+p_2+1}\rightarrow\Sph^{q}$ is harmonic. 
Recall, that $f:\Sph^{p_1}\times\Sph^{p_2}\rightarrow\Sph^{q-1}$ is called a bi-eigenmap, if the map
$f(\,\cdot\,,x):\Sph^{p_1}\rightarrow\Sph^{q-1}$ is an eigenmap for every $x\in\Sph^{p_2}$
and if the map
$f(x,\,\cdot\,):\Sph^{p_2}\rightarrow\Sph^{q-1}$ is an eigenmap for every $x\in\Sph^{p_1}$.

\smallskip

Note that this construction fits in the framework of this section. This becomes much clearer when one switches from Euclidean coordinates to polar coordinates.
Indeed,
let $\Sph^{p_1+p_2+1}$ and $\Sph^q$ be endowed with the doubly warped metric 
$$dt^2+\sin^2tds^2_{p_1}+\cos^2tds^2_{p_2},$$
and with the warped metric
$$dt^2+\sin^2tds^2_{q-1},$$
respectively.
Consider the maps $\rho:\Sph^{p_1+p_2+1}\rightarrow\Sph^q$ given by
\begin{align*}
(t,\mu,\nu)\mapsto(r(t),f(\mu,\nu)),
\end{align*}
where $f:\Sph^{p_1+p_2}\rightarrow\Sph^{q-1}$ is a bi-eigenmap with eigenvalues $\lambda_1, \lambda_2\in\N$.
The energy of $\rho$ is given by
\begin{align*}
E(\rho)=c\int_0^{\pi/2}(r'(t)^2+\lambda_{1}\tfrac{\sin^2r}{\sin^2t}+\lambda_2\tfrac{\sin^2r}{\cos^2t})\sin^{p_1}t\cos^{p_2}tdt,
\end{align*}
where $c$ is a positive constant. The associated Euler Lagrange equation is given by (\ref{ode_hopf}).
Smoothness across the focal varieties can be proven as in \cite{smith} or \cite{gastel}.

\bigskip

\noindent\textit{Join construction.}
Let $f_i:\Sph^{p_i}\rightarrow\Sph^{q_i}$, $i\in\lbrace 1,2\rbrace$ be two homogeneous polynomials.
In algebraic topology, the join of $f_1$ and $f_2$ is given by 
$$J_{f_{1},f_2}:\Sph^{p_1+p_2+1}\rightarrow\Sph^{q_1+q_2+1}, (x_1\sin t,x_2\cos t)\mapsto (f_1(x_1)\sin t,f_2(x_2)\cos t),$$
 where $x_1$ and $x_2$ are defined as above.
Smith \cite{smith} introduced maps $J$ homotopic to $J_{f_1,f_2}$, and determined conditions for them to be harmonic.
 More precisely, he considered
 eigenmaps $f_1$ and  $f_2$ with eigenvalues $\lambda_1$ and $\lambda_2$, respectively,  and made the ansatz
 $$J(x_1\sin t,x_2\cos t)=(f_1(x_1)\sin r(t),f_2(x_2)\cos r(t)),$$
where $r:[0,\tfrac{\pi}{2}]\rightarrow[0,\tfrac{\pi}{2}]$.
Smith proved that $J$ yields a harmonic map if $r$ satisfies
\begin{align}
\label{ode_join}
\ddot r(t)+(p_1\cot t-p_2\tan t)\dot r(t)-\tfrac{1}{2}\left(\tfrac{\lambda_1}{\sin^2t}-\tfrac{\lambda_2}{\cos^2t}\right)\sin2r(t)=0,
\end{align}
with $r(0)=0$, $r(\tfrac{\pi}{2})=\tfrac{\pi}{2}$ and $0\leq r\leq\tfrac{\pi}{2}$.

\smallskip

As above we now switch to polar coordinates to see that this construction fits into the framework of this section.
Let the spheres $\Sph^{p_1+p_2+1}$ and $\Sph^{q_1+q_2+1}$ be endowed with the doubly warped metric 
\begin{align*}
dt^2+\sin^2tds^2_{p_1}+\cos^2tds^2_{p_2}\quad\mbox{and}\quad dt^2+\sin^2tds^2_{q_1}+\cos^2tds^2_{q_2},
\end{align*}
respectively. 
Furthermore, let $f:\Sph^{p_1}\rightarrow\Sph^{q_1}$ and $g:\Sph^{p_2}\rightarrow\Sph^{q_2}$ be eigenmaps with energy density $\lambda_1$ and $\lambda_2$, respectively.
Consider the maps $\phi:\Sph^{p_1+p_2+1}\rightarrow\Sph^{q_1+q_2+1}$ given by
\begin{align*}
(t,\mu,\nu)\mapsto(r(t),f(\mu),g(\nu)).
\end{align*}
The energy of $\phi$ is given by
\begin{align*}
E(\phi)=c\int_0^{\pi/2}(r'(t)^2+\lambda_1\tfrac{\sin^2r}{\sin^2t}+\lambda_2\tfrac{\cos^2r}{\cos^2t})\sin^{p_1}t\cos^{p_2}tdt,
\end{align*}
where $c$ is a positive constant.
The associated Euler Lagrange equation is given by (\ref{ode_join}).
Again, smoothness across the focal varieties can be proven as in \cite{smith} or \cite{gastel}.

\smallskip

For mored details on the study of the Hopf and the Join construction we refer the reader to the papers \cite{ding,gastel,pr,ratto} and to the references therein.

\section{Harmonic maps from balls to spheres}
\label{sec1}
In this section we provide new harmonic maps from balls to spheres.
The first subsection contains the prove of Theorem A, the second subsection provides Theorem\,B.

\subsection{ Dirichlet problem and harmonic maps from balls to spheres}
The purpose of this section is to prove Theorem\,A.
With the exception of one special map, all maps constructed in this subsection are smooth --- compare Theorem\,\ref{sol}.

\bigskip

\noindent\textbf{Preliminaries.}\\
Throughout this section let $f_{\mathfrak{e}_k}:\Sph^{n-1}\rightarrow\Sph^{m-1}$ be an eigenmap
with energy density 
$$\mathfrak{e}_k=k(k+n-2)/2,$$ $k\in\N$.
We are interested in constructing solutions of the Dirichlet problem $\mbox{Dir}(n,m,\rho,e_k)$ (see the introduction) which are of the form 
$$u(x)=(f_{\mathfrak{e}_k}(\tfrac{x}{\lvert x\lvert})\sin \Phi(\lvert x\lvert),\cos\Phi(\lvert x\lvert)),$$ 
where $\Phi:B^n\rightarrow[0,\pi]$ is a $C^2$ map.
As in Subsection 1.2.2, we introduce polar coordinates $(r,\theta)$ on $B^n$ and geodesic coordinates $(\Phi,\Theta)$ on $\Sph^m$.
In terms of these coordinates, the maps $u$ are given by
\begin{align}
\label{maps}
\varphi:B^n\rightarrow\Sph^m,\quad (r,\theta)\mapsto (\Phi(r),f_{\frak{e}_k}(\theta)),
\end{align}
By Subsection\,1.2.1 the Euler Lagrange equation associated to the energy functional of $u$ is given by
\begin{align}
\label{el_1}
\Phi^{''}(r)+(n-1)\tfrac{\Phi'(r)}{r}-\mathfrak{e}_k\tfrac{\sin(2\Phi(r))}{r^2}=0.
\end{align}
Following \cite{jk} we define $\Psi:(-\infty,0]\rightarrow [0,\pi]$ to be $\Psi(t)=\Phi(e^t)$.
Thus equation (\ref{el_1}) becomes
\begin{align}
\label{ode_psi}
\Psi^{''}(t)+(n-2)\Psi'(t)-\mathfrak{e}_k\sin(2\Psi(t))=0.
\end{align}
In terms of $\Psi$ the energy of (\ref{maps}) is given by
\begin{align*}
E(\varphi)=c\int_{-\infty}^0(\Psi'(t)^2+2\mathfrak{e}_k\sin^2(\Psi(t)))e^{(n-2)t}dt,
\end{align*}
where $c\in\R_+$.

\smallskip

Writing the ordinary differential equation (\ref{ode_psi}) as a system of first order differential equations we get
\begin{align}
\label{system2}
\tfrac{d}{dt}\left(\begin{smallmatrix}
\Psi\\ 
\Psi'
\end{smallmatrix} \right)=\left(\begin{smallmatrix}
\Psi'\\ 
-(n-2)\Psi'+\frak{e}_k\sin(2\Psi)
\end{smallmatrix} \right).
\end{align}
Clearly, the critical points of this system are given by $(\Psi,\Psi')=(k\pi/2,0)$ for $k\in\Z$.

\bigskip

\noindent\textbf{Harmonic maps from balls to spheres with finite energy.}\\
Following Lemma 2.13 in \cite{jk}, we classify harmonic maps of the form (\ref{maps}) with finite energy.
As in \cite{jk}, we prove that these solutions are directly related to the trajectories of equation (\ref{ode_psi}) which connect the critical points $(0,0)$ or $(\pi,0)$ with $(\pi/2,0)$ in the phase plane $(\Psi,\Psi')$.

\smallskip

The next lemma is needed as preparatory lemma.

\begin{lemma}
\label{stripe}
For a non-constant solution $\Psi$ of (\ref{ode_psi}) with finite energy there exists a $k_0\in\Z$ such that $k_0\pi<\Psi <(k_0+1)\pi$.
\end{lemma}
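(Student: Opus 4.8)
The plan is to show that a non-constant finite-energy solution $\Psi$ of (\ref{ode_psi}) never attains a value in $\pi\Z$; the lemma then follows immediately, since $\Psi$ is continuous and the domain $(-\infty,0]$ is connected, so the image $\Psi((-\infty,0])$ is an interval disjoint from $\pi\Z$ and hence contained in a single open strip $(k_0\pi,(k_0+1)\pi)$. Everything therefore reduces to excluding the existence of a point $t_*\in(-\infty,0]$ with $\Psi(t_*)=j\pi$ for some $j\in\Z$.

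First I would introduce the mechanical energy $H(t)=\tfrac12\Psi'(t)^2-\mathfrak{e}_k\sin^2\Psi(t)$ associated to (\ref{ode_psi}). Differentiating and substituting (\ref{ode_psi}) gives the Lyapunov identity $H'(t)=-(n-2)\Psi'(t)^2\le 0$, so $H$ is non-increasing (here $n\ge 3$ makes the damping coefficient positive). The decisive object is the \emph{weighted} energy $G(t)=H(t)e^{(n-2)t}$; a further short computation yields
\[ G'(t)=-\tfrac{n-2}{2}\bigl(\Psi'(t)^2+2\mathfrak{e}_k\sin^2\Psi(t)\bigr)e^{(n-2)t}\le 0. \]
Thus $G$ is non-increasing in $t$, so it increases to a limit $L\in(-\infty,+\infty]$ as $t\to-\infty$, and integration over $(-\infty,0]$ gives $G(0)-L=-\tfrac{n-2}{2c}E(\varphi)$. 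In particular, $\Psi$ has finite energy if and only if $L<\infty$.

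Now suppose for contradiction that $\Psi(t_*)=j\pi$ for some $j\in\Z$. Since $\Psi$ is non-constant and $(j\pi,0)$ is a critical point of (\ref{system2}), uniqueness forces $\Psi'(t_*)\neq 0$, so that $H(t_*)=\tfrac12\Psi'(t_*)^2>0$ and $G(t_*)>0$. As $G$ is non-increasing, its limit satisfies $L\ge G(t_*)>0$. Finiteness of the energy gives $L<\infty$, and writing $\Psi'(t)^2e^{(n-2)t}=2G(t)+2\mathfrak{e}_k\sin^2\Psi(t)\,e^{(n-2)t}$ and letting $t\to-\infty$ (so that $G(t)\to L$ while $\sin^2\Psi\,e^{(n-2)t}\to 0$) I obtain $\Psi'(t)^2e^{(n-2)t}\to 2L>0$. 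Hence the integrand of $E(\varphi)$ is bounded below by a positive constant for all sufficiently negative $t$, so $E(\varphi)=\infty$, contradicting finite energy. This rules out any crossing and finishes the argument.

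The main obstacle is identifying the correct quantity that contradicts finite energy. The naive route — bounding $H$ below by $H(t_*)>0$ and using $H'\le-(n-2)\delta^2$ — yields only linear growth of $H$, hence of $\Psi'^2$, in $|t|$; against the exponentially decaying weight $e^{(n-2)t}$ this still produces a convergent integral, so no contradiction emerges. Passing to the weighted Hamiltonian $G=He^{(n-2)t}$ is exactly what resolves this: it simultaneously encodes the finite-energy condition through its limit $L$ and exposes the true exponential growth $\Psi'^2\sim 2Le^{-(n-2)t}$, which is what forces the weighted integrand to tend to the positive constant $2L$ and the energy to diverge.
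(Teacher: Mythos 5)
Your proof is correct, and at its core it runs on the same mechanism as the paper's: your $H$ is one half of the Lyapunov function $V=\Psi'^2-2\mathfrak{e}_k\sin^2\Psi$ used there, and the endgame is identical --- at a point where $\Psi\in\pi\Z$ one has $V=\Psi'^2\ge 0$, so once a nonpositivity bound on $V$ is known the derivative must vanish and uniqueness for the autonomous system forces $\Psi$ to be constant. Where you genuinely differ is in how that sign condition is obtained. The paper gets $V(-\infty)\le 0$ from the boundary behaviour at the origin (namely $\Psi'(t)=\Phi'(e^t)e^t\to 0$ because $\Phi'(0)<\infty$) and then uses $V'=-2(n-2)\Psi'^2\le 0$; you instead pass to the weighted quantity $G=He^{(n-2)t}$, observe that $G'$ is $-\tfrac{n-2}{2}$ times the energy density, and let the finite-energy hypothesis itself force $\lim_{t\to-\infty}G\le 0$. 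This buys you a proof from the literally stated hypothesis (finite energy) rather than from the auxiliary regularity condition $\Phi'(0)<\infty$ that the paper quietly imports, which is arguably cleaner. Two minor remarks. First, as written your argument needs $n\ge 3$: for $n=2$ the weight is trivial and your equivalence between finite energy and $L<\infty$ degenerates, whereas the paper's version is uniform in $n$; in that case one argues instead that $H$ is conserved, so $H\equiv H(t_*)>0$ makes the unweighted integrand bounded below. Second, your closing claim that the unweighted route must fail is not quite accurate: since $\Psi'^2\ge 2H$ wherever $H>0$, the inequality $H'\le -2(n-2)H$ integrates to $H(t)\ge H(t_*)e^{2(n-2)(t_*-t)}$ for $t\le t_*$, which already outgrows the weight $e^{(n-2)t}$. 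Neither point affects the correctness of what you wrote.
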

\begin{proof}
Set 
\begin{align*}
\label{fct_v}
V=\Psi'^2-2\frak{e}_k\sin^2\Psi.
\end{align*}
By (\ref{ode_psi}) we have
\begin{align*}
V'=-2(n-2)\Psi'^2,
\end{align*}
i.e. $V$ is a Lyapunov function for (\ref{ode_psi}).
Since $\Psi'(t)=\Phi'(e^t)e^t$ and we are searching for solutions of (\ref{el_1}) with $\Phi'(0)<\infty$,
we have $$\lim_{t\rightarrow -\infty}\Psi'(t)=0.$$ Consequently, $V(-\infty)\leq 0$. Therefore there cannot exist $t_0\in(-\infty,\infty)$ such that $\Psi(t_0)$ is a multiple of $\pi$.
\end{proof}

As an immediate consequence of the previous lemma and its proof we obtain, that a solution of (\ref{ode_psi})
can not cover both poles of the sphere. 

\smallskip

We are now ready to classify harmonic maps of the form (\ref{maps}) with finite energy.

\begin{lemma}
\label{fin_energy}
Let $\Phi(r)=\Psi(\ln r)$ be as in (\ref{maps}) such that $\varphi$ has finite energy
and $0\leq\Psi\leq\pi$. Then\vspace{0.2cm}\\
if $n=2$
\begin{enumerate}
\renewcommand{\labelenumi}{(\roman{enumi})}
\item we have $\Phi(r)=2\arctan(cr^k)$ or $\rho(r)=\pi-2\arctan(cr^k)$ with some constant $c\geq 0$,
\end{enumerate}
\vspace{0.1cm}
if $n\geq 3$
\begin{enumerate}
\setcounter{enumi}{1}
\renewcommand{\labelenumi}{(\roman{enumi})}
\item $\Psi$ is either constant with values $0,\pi/2$ or $\pi$; or
\item $\Psi$ is extendable to a solution of (\ref{ode_psi}) on $\R$ and\\
$\lim_{t\rightarrow -\infty}(\Psi(t),\Psi'(t))=(0,0)$ or $(\pi,0)$\\
$\lim_{t\rightarrow \infty}(\Psi(t),\Psi'(t))=(\pi/2,0)$.
\end{enumerate}
\end{lemma}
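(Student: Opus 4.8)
The plan is to separate constant from non-constant solutions and, in the non-constant case, to extract the asymptotics directly from the phase portrait of \eqref{system2} using the Lyapunov function $V=\Psi'^2-2\mathfrak{e}_k\sin^2\Psi$ from the proof of Lemma~\ref{stripe}. Two facts are in hand from the start. First, since $\Psi'(t)=\Phi'(e^t)e^t$ and we demand $\Phi'(0)<\infty$, we have $\lim_{t\to-\infty}\Psi'(t)=0$. Second, Lemma~\ref{stripe} confines every non-constant finite-energy solution to a single open strip $(k_0\pi,(k_0+1)\pi)$, which together with the hypothesis $0\le\Psi\le\pi$ forces $k_0=0$, i.e.\ $0<\Psi<\pi$. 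The constant solutions are immediate: $\Psi\equiv c$ solves \eqref{ode_psi} iff $\sin(2c)=0$, so $c\in\{0,\pi/2,\pi\}$ in $[0,\pi]$, and for $n\ge3$ each of these has finite energy and accounts for case (ii).

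For $n=2$ the damping term in \eqref{ode_psi} vanishes, so $V'\equiv0$ and $V$ is conserved. Here the energy weight $e^{(n-2)t}$ equals $1$, so finiteness of $E(\varphi)=c\int_{-\infty}^{0}(\Psi'^2+2\mathfrak{e}_k\sin^2\Psi)\,dt$ forces the non-negative integrand, and in particular $\sin^2\Psi$, to tend to $0$ as $t\to-\infty$; hence $\Psi(-\infty)\in\{0,\pi\}$. Combined with $\Psi'(-\infty)=0$ this gives $V\equiv0$, i.e.\ $\Psi'^2=2\mathfrak{e}_k\sin^2\Psi=k^2\sin^2\Psi$ (using $\mathfrak{e}_k=k^2/2$ when $n=2$). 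On $(0,\pi)$ I would separate variables in $\Psi'=\pm k\sin\Psi$ and integrate $\int d\Psi/\sin\Psi=\log\tan(\Psi/2)$ to obtain $\tan(\Phi/2)=c\,r^{\pm k}$ with $c>0$. The requirement $\Phi\to0$ (resp.\ $\Phi\to\pi$) as $r\to0$ selects the exponent $+k$ and the branch, yielding $\Phi(r)=2\arctan(cr^k)$ or $\Phi(r)=\pi-2\arctan(cr^k)$ with $c\ge0$; the value $c=0$ recovers the only finite-energy constants $0$ and $\pi$, and this is case (i).

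For $n\ge3$ the damping is strictly positive, so $V'=-2(n-2)\Psi'^2\le0$ and $V$ is strictly decreasing off the equilibria. I would first record the linearisations of \eqref{system2}: at $(0,0)$ and $(\pi,0)$ the eigenvalues solve $\lambda^2+(n-2)\lambda-2\mathfrak{e}_k=0$, a saddle (product of roots $-2\mathfrak{e}_k<0$), while at $(\pi/2,0)$ they solve $\lambda^2+(n-2)\lambda+2\mathfrak{e}_k=0$, a sink (both roots with negative real part). On $(-\infty,0]$ the orbit lies in a compact set (as $\Psi\in[0,\pi]$ and $\Psi'\to0$), so its $\alpha$-limit set is nonempty, compact, connected and invariant; $V$ is constant on it, whence $\Psi'\equiv0$ there, and invariance forces it into the isolated equilibria $\{(j\pi/2,0)\}$, so by connectedness it is a single point. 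The value $(\pi/2,0)$ is excluded, since it would give $V(-\infty)=-2\mathfrak{e}_k$, the global minimum of $V$, forcing $V\equiv-2\mathfrak{e}_k$ and $\Psi\equiv\pi/2$; thus $\lim_{t\to-\infty}(\Psi,\Psi')\in\{(0,0),(\pi,0)\}$ and $V(-\infty)=0$. Next I would extend the solution to all of $\mathbb{R}$: monotonicity of $V$ gives $\Psi'(t)^2\le V(0)+2\mathfrak{e}_k$ for $t\ge0$, and rerunning the argument of Lemma~\ref{stripe} (which uses only $V(-\infty)\le0$ and non-constancy) keeps the extended orbit in $(0,\pi)$, so these a priori bounds exclude blow-up and the solution is global. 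The same invariance argument forward in time confines the $\omega$-limit set to a single equilibrium in $[0,\pi]$; since $V(-\infty)=0$ and $V$ is non-increasing, an $\omega$-limit at $(0,0)$ or $(\pi,0)$ would force $V\equiv0$, hence $\Psi'\equiv0$, a constant. Therefore $\lim_{t\to+\infty}(\Psi,\Psi')=(\pi/2,0)$, giving case (iii): the non-constant finite-energy solutions are exactly the heteroclinic orbits from the saddle $(0,0)$ or $(\pi,0)$ to the sink $(\pi/2,0)$.

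The step I expect to be the main obstacle is the passage from limit \emph{sets} to limit \emph{points}, i.e.\ justifying that the $\alpha$- and $\omega$-limit sets are singletons. This rests on the invariance principle together with the fact that the equilibria of \eqref{system2} are isolated---guaranteed by the hyperbolic linearisations above---and on the compactness and a priori bounds needed to invoke it. Once these are secured, the monotonicity of $V$ handles the remaining bookkeeping cleanly, in particular the exclusion of the wrong equilibrium at each end by comparing the values of $V$.
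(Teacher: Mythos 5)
Your argument is correct and follows essentially the same route as the paper, which simply introduces the Lyapunov function $V=\Psi'^2-2\mathfrak{e}_k\sin^2\Psi$ and defers to Lemma 2.13 of J\"ager--Kaul, whose proof is precisely this kind of phase-plane/limit-set analysis. The only step to tighten is in the $n=2$ case: integrability of the non-negative integrand does not by itself force it to vanish as $t\to-\infty$, but since $V$ is conserved and $\Psi'\to0$ there, $\sin^2\Psi$ converges, and integrability then forces that limit to be zero, after which your computation goes through unchanged.
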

\begin{proof}
Define the function $V$ as in the proof of Lemma\,\ref{stripe}, i.e. $V=\Psi'^2-2c_k\sin^2(\Psi)$.
Now the proof follows along the lines of the proof of Lemma 2.13 in \cite{jk}.
\end{proof}

\begin{remark}
Note that the assumption $0\leq\Psi\leq\pi$ in Lemma\,\ref{fin_energy} is not a real restriction.
By the proof of Lemma\,\ref{fin_energy} it follows that $\lim_{t\rightarrow -\infty}\Psi(t)=\ell\pi$ for a $\ell\in\Z$.
Since (\ref{ode_psi}) is $2\pi$-periodic in $\Psi$, we can restrict ourselves to the cases $\ell=0$ and $\ell=1$.
If $\ell=0$ and $-\pi\leq\Psi\leq 0$, we may consider $-\Psi$ instead of $\Psi$.
Similarly, if $\ell=1$ and $\pi\leq\Psi\leq 2\pi$, we may consider $2\pi-\Psi$ instead of $\Psi$.
\end{remark}

\bigskip

\noindent\textbf{Study of the plane system.}\\
Following \cite{jk}, we rewrite the system (\ref{system2}) in terms of the functions
$q(t)=2\Psi(t)-\pi$ and $p(t)=q'(t)$:
\begin{equation}
\label{sys2}
\left(\begin{smallmatrix}q'\\ p'
\end{smallmatrix} \right)=\left(\begin{smallmatrix}
p\\ 
-(n-2)p-2\frak{e}_k\sin(q)
\end{smallmatrix} \right)=:V(q,p).
\end{equation}
Clearly, the critical points of this system are given by $(q,p)=(k\pi,0)$ for $k\in\Z$.

\smallskip

Note that the critical points $(0,0)$ and $(\pi/2,0)$ of (\ref{system2}) in the $(\Psi,\Psi')$ phase space correspond to the critical points $(-\pi,0)$ and $(0,0)$ of (\ref{sys2}) in the $(q,p)$ phase space, respectively.
By the same reasoning as in \cite{jk}, we show that there exists exactly one invariant curve in the $(q,p)$-plane which connects  the critical points $(-\pi,0)$ and $(0,0)$. Furthermore, the trajectories on this curve are uniquely determined up to a translation in $t$.
We denote such a trajectory by $(Q,P):\R\rightarrow\R^2$.
Clearly, the constant $c$ in 
\begin{align}
\lim_{t\rightarrow -\infty}\tfrac{1}{2}e^{-t}P(t)=\Phi'(0)=:c
\end{align}
determines $(Q,P)$ uniquely. As in \cite{jk}, we define $(Q_n,P_n)$ by choosing $c=1$.
In the following lemma, which is an analogue of Lemma 2.18 in \cite{jk},
we show that $Q_n$ oscillates around $0$ for $t\rightarrow\infty$ and $3\leq n\leq k_0$, where $$k_0:=\lfloor 2(1+k+\sqrt{k})\rfloor.$$
Moreover, it is proven that $\Theta_n$ increases strictly, tending to $0$ as $t\rightarrow\infty$, if $n\geq k_0+1$. 

\begin{lemma}
Let $k$, as in (\ref{maps}), be given.
Introduce the functions $R_n$ and $\Theta_n$ by
\begin{align*}
Q_n(t)+iP_n(t)=R_n(t)e^{i\Theta_n(t)}.
\end{align*}
Then we have 
\begin{align*}
0<R_n(t)\leq ce^{\mu t}\quad\forall t\in\R,
\end{align*}
where $c=c(n,\mu)>0$ is a constant and $\mu>\mbox{Re}\,\lambda_{n,k}^+$. 
Then we get
\begin{enumerate}
\renewcommand{\labelenumi}{(\roman{enumi})}
\item for $3\leq n \leq k_0$ the invariant curve is a spiral with center $(0,0)$ satisfying
\begin{align*}
\lim_{t\rightarrow\infty}\tfrac{\Theta_n(t)}{t}=-\tfrac{1}{2}\sqrt{-(4+8k-4k^2 -4n-4kn+n^2)},
\end{align*}
\item for $n\geq k_0+1$ we have
\begin{align*}
\pi+\arctan\lambda_{n,k}^+<\Theta_n(t)<\pi\quad\mbox{and}\quad\lim_{t\rightarrow\infty}\Theta_n(t)=\pi+\arctan\lambda_{n,k}^+.
\end{align*}
\end{enumerate}
\end{lemma}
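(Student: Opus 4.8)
The whole statement concerns the way the heteroclinic trajectory $(Q_n,P_n)$ enters the critical point $(0,0)$, so the plan is to analyse the local phase portrait of (\ref{sys2}) there. First I would linearise (\ref{sys2}) at $(0,0)$: using $\sin q=q+O(q^3)$, the linear part has Jacobian
\begin{align*}
A=\begin{pmatrix}0 & 1\\ -2\mathfrak{e}_k & -(n-2)\end{pmatrix},\qquad
\lambda_{n,k}^{\pm}=\tfrac12\bigl(-(n-2)\pm\sqrt{(n-2)^2-8\mathfrak{e}_k}\,\bigr).
\end{align*}
A direct computation gives $(n-2)^2-8\mathfrak{e}_k=n^2-4n-4kn-4k^2+8k+4$, which is exactly the quantity $4+8k-4k^2-4n-4kn+n^2$ under the root in (i). Since $2\mathfrak{e}_k=k(k+n-2)>0$, the product of the eigenvalues is positive and their sum is $-(n-2)<0$, so $(0,0)$ is asymptotically stable, with $\mbox{Re}\,\lambda_{n,k}^{\pm}=-(n-2)/2$ in the complex case. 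The discriminant is negative precisely for $3\le n\le k_0$ and nonnegative for $n\ge k_0+1$, so $(0,0)$ is a stable focus in the first range and a stable node in the second; this is the dichotomy behind (i) and (ii).

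For the exponential bound I would observe that $(Q_n,P_n)$ is, by construction, the orbit of the unique invariant curve running into $(0,0)$, hence lies on its stable set. The standard linearisation estimate then yields, for every $\mu>\mbox{Re}\,\lambda_{n,k}^+$, a constant $c$ with $|(Q_n(t),P_n(t))|\le ce^{\mu t}$ as $t\to\infty$; since the orbit is bounded (it connects two finite critical points) the bound extends to all $t\in\R$ after enlarging $c$, giving $0<R_n\le ce^{\mu t}$, the strict positivity coming from uniqueness of solutions, as $(0,0)$ is attained only in the limit. For the focus case (i) I would pass to the angle and compute
\begin{align*}
\Theta_n'=\frac{Q_nP_n'-P_nQ_n'}{R_n^2}
=\frac{-(n-2)Q_nP_n-2\mathfrak{e}_kQ_n\sin Q_n-P_n^2}{R_n^2}.
\end{align*}
Replacing $\sin Q_n$ by $Q_n$ costs a term of order $Q_n^3/R_n^2=O(R_n)=O(e^{\mu t})$, which is integrable in $t$; the remaining expression is the angular speed of the linear focus $A$. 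This rotation is clockwise (on the $q$-axis $\Theta_n'=-2\mathfrak{e}_k\sin Q_n/Q_n<0$) and averages one full turn per period $2\pi/\mbox{Im}\,\lambda_{n,k}^+$, so $\Theta_n(t)/t\to-\mbox{Im}\,\lambda_{n,k}^+=-\tfrac12\sqrt{-(4+8k-4k^2-4n-4kn+n^2)}$, as claimed.

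For the node case (ii) the eigenvalues are real with $\lambda_{n,k}^-<\lambda_{n,k}^+<0$ and eigenvectors $(1,\lambda_{n,k}^{\pm})$. A trajectory entering a stable node generically becomes tangent to the slow direction $(1,\lambda_{n,k}^+)$; since $(Q_n,P_n)$ emanates from the saddle $(-\pi,0)$ and reaches $(0,0)$ through the sector $q<0$, $p>0$ (with $q$ increasing, as $q'=p$), the relevant ray is $-(1,\lambda_{n,k}^+)$, whose polar angle is $\pi+\arctan\lambda_{n,k}^+$. This gives $\lim_{t\to\infty}\Theta_n(t)=\pi+\arctan\lambda_{n,k}^+$. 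The enclosure $\pi+\arctan\lambda_{n,k}^+<\Theta_n<\pi$ and the monotonicity then follow once one checks that $\Theta_n'$ keeps a fixed sign in this sector, i.e.\ that the sector contains no further invariant ray and that the slow eigenray is approached from the $\Theta=\pi$ side; for this one needs that the component of $(Q_n,P_n)$ along the fast eigenvector $(1,\lambda_{n,k}^-)$ does not vanish, which is guaranteed by the orbit's origin at the saddle.

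The main obstacle in both parts is not the linear algebra but upgrading the linear picture to the true nonlinear orbit: a purely topological linearisation distorts angles and cannot by itself deliver the exact rotation rate in (i) or the precise limiting ray in (ii). The decisive ingredient is therefore the exponential decay $R_n\le ce^{\mu t}$, which makes the nonlinear corrections to $\Theta_n'$ absolutely integrable and so pins down the angular asymptotics exactly; proving this bound together with the non-degeneracy of the entry direction (the orbit not lying on the fast stable manifold) is the technical heart, and is exactly where I would follow the argument of Lemma 2.18 in \cite{jk} most closely.
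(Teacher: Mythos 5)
Your overall strategy is exactly the intended one: the paper gives no proof of this lemma, stating only that it is an analogue of Lemma 2.18 in \cite{jk}, and that lemma is proved by precisely the program you describe --- linearise (\ref{sys2}) at the origin, read off $\lambda_{n,k}^{\pm}$ from $\lambda^2+(n-2)\lambda+2\mathfrak{e}_k=0$, use the exponential decay of $R_n$ to make the nonlinear correction to $\Theta_n'$ absolutely integrable, and then treat the focus and node cases separately. Your identification of the quantity under the square root in (i) with the discriminant $(n-2)^2-8\mathfrak{e}_k$ is correct, as are the clockwise orientation and the limit $-\mbox{Im}\,\lambda_{n,k}^+$ in the focus case.

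There is, however, one step you assert without checking, and it does not check out against the paper's definitions: you claim the discriminant is negative precisely for $3\le n\le k_0$ and nonnegative for $n\ge k_0+1$, with $k_0=\lfloor 2(1+k+\sqrt k)\rfloor$. Solving $(n-2)^2-4k(k+n-2)<0$ for $n$ gives $n<2(1+k+k\sqrt 2)$, so the focus/node threshold is $\lfloor 2(1+k+k\sqrt 2)\rfloor$, which agrees with the paper's $k_0$ only for $k=1$ (the J\"ager--Kaul case); already for $k=2$ the discriminant is still negative at $n=9,10,11$ although $k_0=8$. So either the paper's $k_0$ contains a typo ($\sqrt k$ in place of $k\sqrt 2$) or your claimed dichotomy, and with it the split between (i) and (ii), fails for $k\ge 2$; this must be resolved, since $k_0$ propagates into Theorem A. A second, smaller point: in the node case, generic tangency to the slow direction $(1,\lambda_{n,k}^+)$ requires that the component of the orbit along the \emph{slow} eigenvector not vanish asymptotically (if it vanished, the orbit would enter along the fast direction $(1,\lambda_{n,k}^-)$); you state the condition on the fast component instead. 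That non-vanishing, together with the one-sidedness $\pi+\arctan\lambda_{n,k}^+<\Theta_n<\pi$, still needs the comparison and monotonicity argument you only allude to, and this is where following \cite{jk} closely is genuinely necessary rather than optional.
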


\bigskip

\noindent\textbf{Dirichlet problem.}\\
J\"ager and Kaul studied the Dirichlet problem $\mbox{Dir}(n,\rho),$ which depends on a parameter $\rho\in[0,\pi]$ (compare page 154 in \cite{jk}):
Find a rotationally symmetric harmonic map $u\in H^1_2(B^n,\Sph^n)$ satisfying the boundary condition
$$u_{\lvert\partial B^n}=b_{\rho}:x\mapsto (x\sin\rho,\cos\rho).$$
Here we will study the following generalized Dirichlet problem.
\begin{dir2}
Find a map $u\in H^1_2(B^n,\Sph^m)$ of the form (\ref{maps}) satisfying the boundary condition
$$u_{\lvert\partial B^n}=b_{\rho}:x\mapsto (f_{\frak{e}_k}(x)\sin\rho,\cos\rho).$$
\end{dir2} 
Obviously, for $k=1$ and $n=m$ the Dirichlet problem $\mbox{Dir}(n,n,\rho,f_{\frak{e}_1})$ coincides with $\mbox{Dir}(n,\rho)$.

\smallskip

In the next theorem we characterize the solutions of the Dirichlet problem  $\mbox{Dir}(n,m,\rho,f_{\frak{e}_k})$.
However, we need to introduce some notation first.
Following \cite{jk} we set 
\begin{align*}
\Psi_n=\tfrac{1}{2}(Q_n+\pi).
\end{align*}
Hence $\Psi_n$ satisfies the ODE (\ref{ode_psi}) with 
\begin{align*}
\lim_{t\rightarrow -\infty}\Psi(t)=0\quad\mbox{and}\quad\lim_{t\rightarrow -\infty}e^{-t}\Psi'(t)=1.
\end{align*}
With this notation at hand we can formulate the following theorem.

\begin{theorem}
\label{sol}
The solutions of $\mbox{Dir}(n,m,\rho,f_{\frak{e}_k})$ are characterized as follows.\vspace{0.3cm}\\
$n=2$.\\
\indent\indent\indent\indent $\Phi(r)=2\arctan(r^k\tan(\tfrac{\rho}{2}))$\\
or\\ 
\indent\indent\indent\indent $\Phi(r)=2\arctan(\tfrac{1}{r^k}\tan(\tfrac{\rho}{2}))$\vspace{0.2cm}\\
with $\arctan(\infty):=\pi/2$.\vspace{0.3cm}\\
$n\geq 3$.\\ The equator map $u_0$, given by $\Phi=\pi/2$, is the only discontinuous harmonic  map for $\rho=\pi/2$.
All other solutions are smooth and they either cover the northpole or the southpole (exclusively).\\
The northpole is covered if and only if there exists a number $\tau\in\R\cup{-\infty}$ satisfying
\begin{equation}
\Phi(r)=\Psi_n(\tau+\ln r),\quad\mbox{for all}\quad r\in[0,1],\quad\quad \Psi_n(\tau)=\rho,
\end{equation}
where $\Psi_n(-\infty):=0$.
\end{theorem}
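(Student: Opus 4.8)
The overall plan is to translate the finite-energy classification of Lemma~\ref{fin_energy} into conditions on the boundary value $\rho$, exploiting that~(\ref{ode_psi}) is autonomous and that the connecting trajectory is unique up to a $t$-translation. Membership in $H^1_2(B^n,\Sph^m)$ forces finite energy, so every admissible solution is one of those classified. For $n=2$ one has $\mathfrak{e}_k=k^2/2$, and~(\ref{el_1}) reads $\Phi''+\Phi'/r-(k^2/2)\,r^{-2}\sin(2\Phi)=0$. I would first verify the explicit families directly: writing $\Phi(r)=2\arctan(cr^k)$ and setting $w=cr^k$ yields the single identity $\Phi'=(k/r)\sin\Phi$, whence differentiation gives $\Phi''+\Phi'/r=(k^2/2)\,r^{-2}\sin(2\Phi)$, so~(\ref{el_1}) holds. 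The boundary condition $\Phi(1)=\rho$ then forces $c=\tan(\rho/2)$, giving the first formula; the second arises from the reflection $\Phi\mapsto\pi-\Phi$ (which preserves~(\ref{el_1})) together with $\pi-2\arctan(x)=2\arctan(1/x)$, producing $\Phi(r)=2\arctan(r^{-k}\tan(\rho/2))$ with the convention $\arctan(\infty)=\pi/2$. That these exhaust all solutions is exactly Lemma~\ref{fin_energy}(i).

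For $n\ge 3$, Lemma~\ref{fin_energy} says each solution is either constant with value $0,\pi/2,\pi$, or a non-constant trajectory with $\lim_{t\to-\infty}(\Psi,\Psi')\in\{(0,0),(\pi,0)\}$ and $\lim_{t\to\infty}(\Psi,\Psi')=(\pi/2,0)$. The constant values $0$ and $\pi$ give the smooth constant pole maps, realizing $\rho=0$ and $\rho=\pi$; the value $\pi/2$ gives the equator map $u_0$, which requires $\rho=\pi/2$ and is discontinuous at the origin since $f_{\mathfrak{e}_k}(x/\lvert x\rvert)$ has no limit as $x\to 0$. For a non-constant solution, Lemma~\ref{stripe} together with its consequence that a solution cannot cover both poles confines $\Psi$ to a single strip, so as $r\to 0$ the map covers the north pole (limit $0$) or the south pole (limit $\pi$), exclusively. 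I would then record smoothness away from the origin (since $\Phi$ solves a non-singular ODE on $(0,1]$ and $f_{\mathfrak{e}_k}$ is smooth) and, crucially, smoothness across the origin, which singles out $u_0$ as the unique discontinuous solution.

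To obtain the characterization, consider a north-pole-covering solution. Its orbit lies on the unique invariant curve joining $(0,0)$ and $(\pi/2,0)$ — equivalently $(-\pi,0)$ and $(0,0)$ in the $(q,p)$-plane. Because~(\ref{ode_psi}) is autonomous, every trajectory on this curve is a $t$-translate of the normalized trajectory $\Psi_n$, i.e.\ $\Psi(t)=\Psi_n(\tau+t)$ for a unique $\tau\in\R$, where $\tau=-\infty$ corresponds to the constant map $\Phi\equiv 0$. Undoing $\Phi(r)=\Psi(\ln r)$ gives $\Phi(r)=\Psi_n(\tau+\ln r)$ on $[0,1]$, and $\Phi(1)=\rho$ becomes $\Psi_n(\tau)=\rho$; conversely any such $\tau$ produces a solution. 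The south-pole case follows verbatim after the reflection $\Psi\mapsto\pi-\Psi$, which maps~(\ref{ode_psi}) to itself.

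The \emph{main obstacle} is the regularity claim, namely that every solution other than $u_0$ extends smoothly across the origin. Near $r=0$ the map~(\ref{maps}) has a coordinate singularity, and upgrading weak $H^1_2$-harmonicity to smoothness there requires the precise asymptotics of the unstable manifold at $(0,0)$ (finiteness of $\Phi'(0)=c$ and the exponential decay of $R_n$) combined with interior elliptic and removable-singularity regularity, exactly as in the corresponding step of~\cite{jk}. By contrast, identifying the trajectory family with $\{\Psi_n(\,\cdot\,+\tau)\}$ and reading off the boundary condition is routine bookkeeping once the uniqueness up to translation of the invariant curve, established earlier, is invoked.
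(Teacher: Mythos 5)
Your proposal is correct and follows essentially the same route the paper takes (which itself states the theorem without a written-out proof, relying on the preceding adaptations of the J\"ager--Kaul lemmas): the finite-energy classification of Lemma~\ref{fin_energy}, the uniqueness up to $t$-translation of the invariant curve joining $(-\pi,0)$ to $(0,0)$, and the explicit integration in the $n=2$ case via $\Phi'=(k/r)\sin\Phi$. Your explicit flagging of the removable-singularity/regularity step at the origin as the one point requiring the asymptotics of the unstable manifold is consistent with how the paper defers that step to \cite{jk}.
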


\begin{center}
\begin{figure}[h]
\caption{Qualitative phase space diagrams}
\begin{picture}(1200,200)(40,-90)
\put(-10,0){\vector(1,0){250}}
\put(0,-60){\vector(0,1){150}}
\multiput(188,-3)(0,10){10}{\line(0,1){6}}
\multiput(188,-28)(0,-10){5}{\line(0,1){6}}
\multiput(205,15)(0,-10){6}{\line(0,1){6}}
\put(188,-5){\makebox(0,0)[tc]{$\frac{\pi}{2}$}}
\put(225,-7){\makebox(0,0)[tc]{$\Psi_n$}}
\put(-9,80){\makebox(0,0)[cr]{$\Psi'_n$}}
\put(120,-40){\makebox(0,0)[cr]{$3\leq n\leq k_0$}}
\thicklines
\qbezier(0,0)(20,70)(100,60)
\qbezier(100,60),(140,55),(190,20)
\qbezier(190,20)(202,9)(205,0)
\qbezier(205,0)(206,-15)(195,-20)
\qbezier(195,-20)(187,-21)(182,-15)
\qbezier(182,-15)(179,-10)(181,0)
\qbezier(181,0)(184,3)(187,3)
\end{picture}

\begin{picture}(-90,-400)(0,-104)
\put(-10,0){\vector(1,0){220}}
\put(0,-60){\vector(0,1){150}}
\put(120,-40){\makebox(0,0)[cr]{$n>k_0$}}
\multiput(188,-3)(0,10){1}{\line(0,1){6}}
\multiput(188,-28)(0,-10){0}{\line(0,1){6}}
\put(188,-5){\makebox(0,0)[tc]{$\frac{\pi}{2}$}}
\put(215,-7){\makebox(0,0)[tc]{$\Psi_n$}}
\put(-9,80){\makebox(0,0)[cr]{$\Psi'_n$}}
\thicklines

\qbezier(0,0)(20,70)(83,60)
\qbezier(83,60),(123,55),(173,20)
\qbezier(173,20)(185,9)(188,0)
\end{picture}
\end{figure}
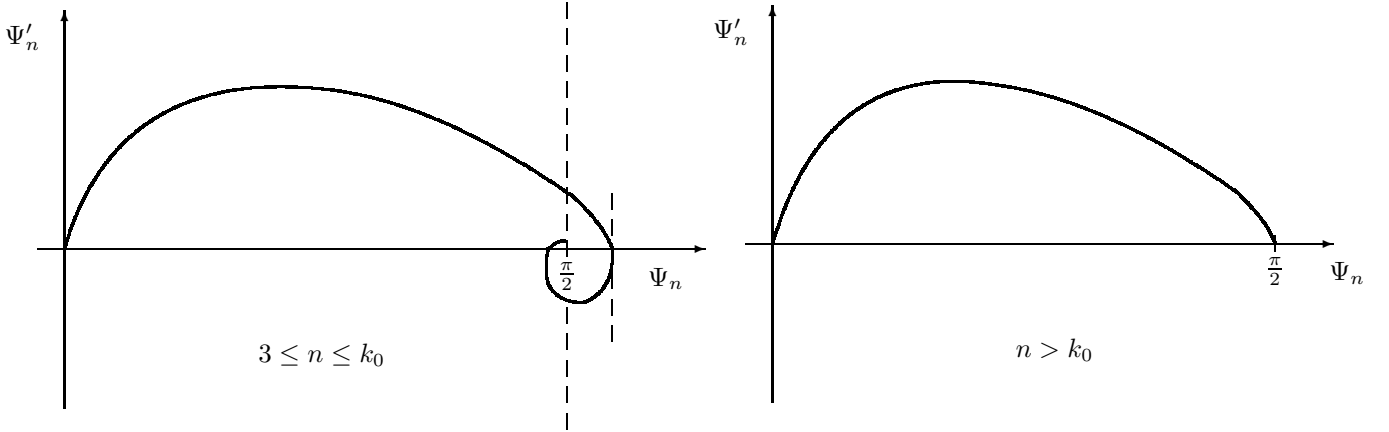
\end{center}

Below we discuss some applications of the previous results.
Figure\,1 contains a sketch of the phase space diagrams for the cases $3\leq n\leq k_0$ and $n>k_0$, respectively.
For the cases $3\leq n\leq k_0$ the curve spirals around the focal point $(\Psi_n,\Psi'_n)=(\pi/2,0)$.
Following \cite{jk}, we denote the maximal value of $\Psi_n$ by $\rho_n$ and the smallest local minimum by $\sigma_n$.
As in \cite{jk} it follows by comparison arguments that
\begin{align*}
\pi/2<\rho_{k_0}<\cdots <\rho_3<\pi.
\end{align*}
For the cases $n>k_0$, the map $\Psi_n$ increases monotonically and goes to $\pi/2$ as $t$ goes to $\infty$.
By Theorem\,\ref{sol} each intersection point of a vertical line with the curve corresponds to a solution of the Dirichlet problem
$\mbox{Dir}(n,m,\rho,f_{\frak{e}_k})$.

\begin{cor}
\label{sol}
The number of solutions of $\mbox{Dir}(n,m,\rho,f_{\frak{e}_k})$ is\vspace{0.3cm}\\
in the case $n=2$: one if $\rho\in[0,\pi)$, zero for $\rho=\pi$,\vspace{0.2cm}\\
in the case $3\leq n\leq k_0$: one for $\rho\in[0,\sigma_n)$, two if $\rho=\sigma_n$, an odd number if $\rho\in(\sigma_n,\pi/2)$,
countable infinite if $\rho=\pi/2$, an even number for $\rho\in(\pi/2,\rho_n)$, one for $\rho=\rho_n$ and zero if $\rho>\rho_n$,\vspace{0.2cm}\\
in the case $n\geq k_0+1$: one if $\rho\in[0,\pi/2)$ and zero for $\rho\geq\pi/2$.
\end{cor}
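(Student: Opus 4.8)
The plan is to read the corollary off the two preceding results: the characterization theorem, which turns "being a solution" into a statement about the profile $\Psi_n$, and the preceding lemma, which describes the qualitative shape of $\Psi_n$. Concretely, for $n\ge 3$ the whole statement reduces to counting, for each $\rho$, the number of $\tau\in\R\cup\{-\infty\}$ with $\Psi_n(\tau)=\rho$, i.e.\ the intersections of the vertical line $\{\Psi=\rho\}$ with the curve traced by $(\Psi_n,\Psi_n')$ in the phase plane, together with the equator map (the unique discontinuous solution, occurring only at $\rho=\pi/2$). The case $n=2$ needs no counting argument at all: the characterization theorem supplies the explicit profiles $\Phi(r)=2\arctan(r^k\tan(\rho/2))$ and $\Phi(r)=2\arctan(r^{-k}\tan(\rho/2))$, and I would simply note that the north-pole profile is admissible and unique for each $\rho\in[0,\pi)$ while it degenerates at $\rho=\pi$ (where $\tan(\rho/2)=\infty$), giving the counts one and zero.

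For $n\ge 3$ I would first dispose of the regime $n\ge k_0+1$. Here the preceding lemma gives that $\Psi_n$ increases strictly from $0$ to $\pi/2$; hence the level $\{\Psi_n=\rho\}$ is attained exactly once for $\rho\in[0,\pi/2)$ (with $\tau=-\infty$ yielding the constant north-pole map at $\rho=0$) and never for $\rho\ge\pi/2$, which is precisely the one/zero dichotomy claimed.

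The substantive case is $3\le n\le k_0$, where the lemma shows the trajectory spirals into the focus $(\pi/2,0)$. First I would record the envelope structure: $\Psi_n$ rises monotonically from $0$ to its global maximum $\rho_n>\pi/2$ and then oscillates, and the Lyapunov function $V=\Psi'^2-2\mathfrak{e}_k\sin^2\Psi$, which is strictly decreasing for $n\ge 3$ as in the proof of Lemma~\ref{stripe}, forces the successive maxima to decrease strictly to $\pi/2$ and the successive minima to increase strictly to $\pi/2$, the smallest being $\sigma_n<\pi/2$. Given this, the count is an arc-by-arc bookkeeping of the sign changes of $\Psi_n-\rho$: the initial rise always contributes one solution; for $\rho<\pi/2$ each minimum lying below $\rho$ contributes a descent/ascent pair, so the total is $1+2\,\#\{\text{minima}<\rho\}$, equal to one for $\rho<\sigma_n$, two at the tangency $\rho=\sigma_n$, and an odd number $\ge 3$ on $(\sigma_n,\pi/2)$; symmetrically, for $\rho>\pi/2$ each maximum exceeding $\rho$ contributes a pair, so the total is $2\,\#\{\text{maxima}>\rho\}$, an even number $\ge 2$ on $(\pi/2,\rho_n)$, exactly one at the tangency $\rho=\rho_n$, and zero for $\rho>\rho_n$. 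At $\rho=\pi/2$ the spiral crosses $\{\Psi=\pi/2\}$ once per half-turn and accumulates at the focus, producing countably infinitely many smooth solutions, to which the equator map is adjoined.

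The main obstacle is the envelope claim underlying the parity count: that the maxima decrease and the minima increase monotonically to $\pi/2$ with $\sigma_n<\pi/2<\rho_n<\pi$, and that $\Psi_n$ is strictly monotone between consecutive critical points, so that no crossing is over- or under-counted. I expect to obtain this from the strict monotonicity of $V$ combined with the local analysis of the focus at $(\pi/2,0)$ already carried out in the preceding lemma (the eigenvalue computation deciding that the linearization is a spiral for $3\le n\le k_0$), exactly as in the corresponding step of \cite{jk}. The two tangential cases $\rho\in\{\sigma_n,\rho_n\}$ then need a short separate argument, namely that a local extremum merely touching the level $\rho$ contributes a single admissible $\tau$ rather than two.
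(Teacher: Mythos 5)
Your proposal is correct and follows essentially the same route as the paper: the paper proves the corollary by invoking Theorem~2.9 to identify solutions with intersection points of the vertical line $\{\Psi=\rho\}$ with the phase curve $(\Psi_n,\Psi_n')$, and then reads off the counts from the spiral (for $3\le n\le k_0$) versus monotone (for $n\ge k_0+1$) phase portraits, with $\rho_n$, $\sigma_n$ and the ordering of extrema obtained by the Lyapunov/comparison arguments as in J\"ager--Kaul. Your write-up merely makes explicit the envelope and tangency bookkeeping that the paper leaves to the figure and to the reference to \cite{jk}.
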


\begin{remark}
J\"ager and Kaul \cite{jk} describe the relation between the energy and the parameter $\rho$.
For the present situation this can be done analogously, with case distinction $n=2$, $3\leq n\leq k_0$ and $n\geq k_0+1$.
\end{remark}

\begin{cor}
\label{ball_sphere}
Let eigenmaps $f_{\frak{e}_k}:\Sph^{n-1}\rightarrow\Sph^{m-1}$ with energy density $\frak{e}_k=k(k+n-2)/2$, $k\in\N$, be given. Furthermore, assume $n\geq 3$ and $m\geq 2$.
Then there exist infinitely many harmonic maps $u:B^{n}\rightarrow\Sph^{m}$. 
\end{cor}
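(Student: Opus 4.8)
The plan is to deduce the corollary directly from the count of solutions of $\mbox{Dir}(n,m,\rho,f_{\frak{e}_k})$ recorded just above, the key point being that one is free to vary the boundary parameter $\rho$: a nondegenerate interval of admissible values of $\rho$ already yields a whole family of harmonic maps $B^n\to\Sph^m$. Thus no new analysis is required; everything reduces to the existence statement together with a distinctness check.

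Concretely, I would fix the given data $n\geq 3$, $m\geq 2$, $k\in\N$ and the eigenmap $f_{\frak{e}_k}$, and set $k_0=\lfloor 2(1+k+\sqrt{k})\rfloor$. Exactly one of the alternatives $3\leq n\leq k_0$ or $n\geq k_0+1$ holds. In the first case the preceding corollary provides a (unique) solution $u_\rho$ of $\mbox{Dir}(n,m,\rho,f_{\frak{e}_k})$ for every $\rho\in[0,\sigma_n)$, and in the second case a solution $u_\rho$ for every $\rho\in[0,\pi/2)$. Since $\sigma_n>0$, in both cases there is an interval $[0,\varepsilon)$ with $\varepsilon>0$ on which $\rho\mapsto u_\rho$ is defined, each $u_\rho$ being a harmonic map $B^n\to\Sph^m$ of the form (\ref{maps}).

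It then remains only to see that these maps are pairwise distinct, and this is immediate from the boundary condition: by construction $u_\rho$ restricts on $\partial B^n$ to $b_\rho$ with $b_\rho(x)=(f_{\frak{e}_k}(x)\sin\rho,\cos\rho)$, and since $\rho\mapsto(\sin\rho,\cos\rho)$ is injective on $[0,\pi]$, distinct values of $\rho\in[0,\varepsilon)$ force distinct boundary maps, hence distinct $u_\rho$. As $[0,\varepsilon)$ is infinite, we obtain infinitely many harmonic maps $u\colon B^n\to\Sph^m$, which proves the corollary. I do not expect any genuine obstacle: all the work is already contained in the construction of $\Psi_n$ and in the counting corollary, and the sole point needing a word of justification is the distinctness of the $u_\rho$, which the boundary values settle at once. (Alternatively, when $3\leq n\leq k_0$ one could instead fix $\rho=\pi/2$, where the corollary already furnishes countably infinitely many solutions with one common boundary value.)
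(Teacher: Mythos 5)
Your argument is correct, but it takes a different route from the paper. The paper's proof is a two-line affair of a different flavour: it fixes $\rho=\pi/2$ and invokes the ``countably infinite'' entry of Corollary~\ref{sol}, after first enlarging $k$ so that $k_0=\lfloor 2(1+k+\sqrt{k})\rfloor\geq n$ and the spiral case $3\leq n\leq k_0$ applies; this is exactly the alternative you mention parenthetically at the end. You instead keep the given $k$ fixed, vary the boundary parameter $\rho$ over a nondegenerate interval ($[0,\sigma_n)$ or $[0,\pi/2)$ according to the case), and separate the resulting maps by their boundary values via the injectivity of $\rho\mapsto(\sin\rho,\cos\rho)$ on $[0,\pi]$. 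Each approach buys something. Yours is hypothesis-faithful: the corollary says the eigenmap is \emph{given}, and for a fixed pair $(n,m)$ eigenmaps $\Sph^{n-1}\to\Sph^{m-1}$ of arbitrarily large degree need not exist, so the paper's ``choose $k$ large enough'' step silently strengthens the hypotheses (and its ``$k_0\geq m$'' should in any case read ``$k_0\geq n$''). On the other hand, the paper's conclusion is qualitatively stronger: it produces countably many distinct harmonic maps solving one and the same Dirichlet problem, whereas your family is distinguished only by its boundary data --- a weaker kind of multiplicity, much as the constant maps already give ``infinitely many harmonic maps'' if no normalization of the boundary values is imposed. Since the corollary as stated asks only for infinitely many harmonic maps $B^n\to\Sph^m$, your proof does establish it; just be aware that it proves less than what the paper's argument delivers, and that the distinctness check via boundary values is the one point you rightly flagged as needing justification.
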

\begin{proof}
For given $m\geq 2$, choose $k$ large enough such that $k_0\geq m$.
The claim is thus established by  Corollary\,\ref{sol}.
\end{proof}

In summary, Theorem\,\ref{sol} and its corollaries establish Theorem\, A of the introduction. 

\begin{thma}
Let $k_0:=\lfloor 2(1+k+\sqrt{k})\rfloor$.
For each $3\leq n\leq k_0$ there exists an explicit $\rho_n\in[\pi/2,\pi]$, such that $\mbox{Dir}(n,m,\rho,f_{\frak{e}_k})$ has at least one solution for $\rho\leq\rho_n$ and no solution if $\rho>\rho_n$.
Furthermore, for $\rho=\pi/2$, there exist infinitely many 
solutions to the Dirichlet problem $\mbox{Dir}(n,m,\rho,f_{\frak{e}_k})$.
However, for $n>k_0$, the Dirichlet problem $\mbox{Dir}(n,m,\rho,f_{\frak{e}_k})$ does not admit any solution for $\rho\geq\pi/2$.
\end{thma}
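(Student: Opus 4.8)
The plan is to deduce Theorem A directly from the phase-plane analysis carried out above, so that the proof is essentially a bookkeeping of the earlier lemmas rather than a fresh argument. First I would recall the reduction: by Subsection 1.2.1 a map of the form (\ref{maps}) is harmonic precisely when $\Phi$ solves (\ref{el_1}), equivalently when $\Psi(t)=\Phi(e^t)$ solves (\ref{ode_psi}); the boundary datum $b_\rho$ becomes $\Psi(0)=\rho$, and ``covering the north pole'' becomes $\lim_{t\to-\infty}\Psi(t)=0$, i.e. $\Phi(0)=0$. Membership in $H^1_2$ forces finite energy, so Lemma \ref{stripe} and Lemma \ref{fin_energy} apply and confine every relevant non-constant solution to the strip $0<\Psi<\pi$ and to a trajectory running from $(0,0)$ or $(\pi,0)$ to $(\pi/2,0)$ in the $(\Psi,\Psi')$-plane. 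After the normalization $\Psi_n=\tfrac12(Q_n+\pi)$, the solutions covering the north pole are exactly the time-translates of the distinguished trajectory $(Q_n,P_n)$, so the whole problem reduces to understanding the single curve $t\mapsto\Psi_n(t)$.

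The geometric heart of the argument is the dichotomy governed by the linearization of (\ref{sys2}) at the focus, which in the $(q,p)$-plane is the point $(0,0)$ corresponding to $\Psi=\pi/2$. Its eigenvalues $\lambda_{n,k}^\pm$ are the roots of $\lambda^2+(n-2)\lambda+2\mathfrak{e}_k=0$, and by the lemma on the asymptotics of $R_n$ and $\Theta_n$ above the sign of the discriminant flips exactly at $n=k_0$: for $3\le n\le k_0$ the eigenvalues form a complex-conjugate pair, whereas for $n>k_0$ they are real and negative. Feeding this into the exponential envelope bound $0<R_n(t)\le ce^{\mu t}$ one obtains that, for $3\le n\le k_0$, the curve spirals into $(\pi/2,0)$, so $\Psi_n$ first increases past $\pi/2$, attains a finite global maximum $\rho_n\in(\pi/2,\pi)$ (finiteness and the strict bound $\rho_n<\pi$ coming from Lemma \ref{stripe}), and thereafter oscillates about $\pi/2$ with amplitude tending to $0$; whereas for $n>k_0$ the curve approaches $(\pi/2,0)$ monotonically along the slow eigendirection, so $\Psi_n$ is strictly increasing with supremum $\pi/2$ attained only in the limit. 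The comparison argument of \cite{jk} gives the ordering $\pi/2<\rho_{k_0}<\cdots<\rho_3<\pi$.

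It then remains to translate crossings of a horizontal level into solution counts. By the characterization theorem above, the solutions of $\mbox{Dir}(n,m,\rho,f_{\mathfrak{e}_k})$ covering the north pole are in bijection with the parameters $\tau\in\R\cup\{-\infty\}$ satisfying $\Psi_n(\tau)=\rho$, i.e. with the level set $\Psi_n^{-1}(\rho)$. For $3\le n\le k_0$ the monotone initial rise of $\Psi_n$ to $\rho_n$ guarantees that every $\rho\le\rho_n$ is attained and no $\rho>\rho_n$ is, yielding existence for $\rho\le\rho_n$ and non-existence for $\rho>\rho_n$; and since the spiral crosses the level $\pi/2$ infinitely often, $\Psi_n^{-1}(\pi/2)$ is countably infinite, producing the claimed infinitely many smooth solutions at $\rho=\pi/2$, the equator map being the sole discontinuous one. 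For $n>k_0$, strict monotonicity shows each $\rho<\pi/2$ is attained exactly once and no $\rho\ge\pi/2$ is attained by any pole-covering map, so no solution exists for $\rho\ge\pi/2$. Collecting these three cases is exactly the statement of Theorem A.

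I expect the genuine obstacle to be the second step, the global phase-plane analysis: the local linearization only describes the behaviour near $(\pi/2,0)$, and one must upgrade it to global statements -- existence and uniqueness up to translation of the connecting trajectory, persistence of the spiral (in particular that it really executes infinitely many full turns, and hence infinitely many crossings of the level $\Psi=\pi/2$, rather than stalling), and the uniform envelope estimate keeping $\Psi_n$ inside the strip. This is where the Lyapunov function $V=\Psi'^2-2\mathfrak{e}_k\sin^2\Psi$ together with the monotonicity $V'=-2(n-2)\Psi'^2$ does the essential work, following the template of \cite{jk}; the remaining counting and comparison steps are then routine.
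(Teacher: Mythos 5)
Your proposal follows essentially the same route as the paper: reduce to the ODE for $\Psi$, invoke the finite-energy classification (Lemmas \ref{stripe} and \ref{fin_energy}), use the phase-plane dichotomy at $n=k_0$ (spiral versus monotone approach to $(\pi/2,0)$) from the lemma on $R_n,\Theta_n$, and then read off the solution counts from the level sets $\Psi_n^{-1}(\rho)$ via Theorem \ref{sol} and Corollary \ref{sol} --- which is exactly how the paper assembles Theorem A. You also correctly locate the genuine work in the global phase-plane analysis carried by the Lyapunov function and the arguments imported from \cite{jk}, so the proposal is a faithful (and more explicit) version of the paper's own argument.
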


\bigskip

\noindent\textbf{Stability of the equator map}\\
Finally we state stability properties of the singular solution $u_0$.
Recall that a harmonic map is said to be stable if its second variation is non-negative. 

\begin{theorem}
The map $u_0$ is
\begin{enumerate}
\renewcommand{\labelenumi}{(\roman{enumi})}
\item  unstable if $3\leq n\leq k_0$,
\item absolute minimum of the energy on the class
$$\mathfrak{C}=\{u\in H^1_2(B^n,\Sph^m)\,\lvert\,u=u_0\quad\mbox{on}\quad \partial B^n\}.$$
\end{enumerate}
\end{theorem}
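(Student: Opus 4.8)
The plan is to treat both assertions as statements about the energy near the equator map $u_0$, which in the reduction corresponds to the constant solution $\Psi\equiv\pi/2$ of \eqref{ode_psi} (equivalently $\Phi\equiv\pi/2$). The admissible variations are the vector fields along $u_0$ that are tangent to $\Sph^m$ and vanish on $\partial B^n$; for the instability in (i) it suffices to exhibit a single such direction along which the second variation is negative, whereas for the minimality in (ii) one needs the global lower bound $E(u)\ge E(u_0)$ on all of $\mathfrak C$. Since a map that is unstable cannot be an absolute minimizer, I read (ii) as the complementary regime $n\ge k_0+1$ and argue under that hypothesis.

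For (i) I would restrict to equivariant variations $\Psi=\pi/2+s\eta$ with $\eta$ compactly supported in $(-\infty,0)$; these are genuine admissible variations (they move $u_0$ along the meridian direction $\partial_\Phi$), and the energy of the family equals the reduced energy stated above up to the positive constant $c$. Because $\Psi'\equiv0$ and $\sin(2\Psi)=0$ at $\Psi=\pi/2$ the first variation vanishes, and a direct expansion gives
\[
\delta^2 E(u_0)(\eta)=2c\int_{-\infty}^0\bigl(\eta'(t)^2-2\mathfrak{e}_k\,\eta(t)^2\bigr)e^{(n-2)t}\,dt,
\]
which in the variable $r=e^{t}$ becomes, up to the factor $2c$,
\[
\int_0^1\bigl(\zeta'(r)^2\,r^{n-1}-2\mathfrak{e}_k\,\zeta(r)^2\,r^{n-3}\bigr)\,dr,\qquad\zeta(r)=\eta(\ln r).
\]
By the sharp weighted Hardy inequality $\int_0^1\zeta'^2 r^{n-1}\,dr\ge\tfrac{(n-2)^2}{4}\int_0^1\zeta^2 r^{n-3}\,dr$ this quadratic form is nonnegative for all admissible $\zeta$ exactly when $\tfrac{(n-2)^2}{4}\ge2\mathfrak{e}_k$, and it can be made strictly negative precisely when $(n-2)^2<8\mathfrak{e}_k$. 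I would realise the negative sign with a truncation of the Hardy extremal $r^{-(n-2)/2}$, e.g.\ $\zeta\sim r^{-(n-2)/2+\delta}$ cut off near $r=1$, and let $\delta\to0^+$ so that the negative leading term of order $1/\delta$ dominates the bounded error from the cutoff. Finally I would observe that $(n-2)^2<8\mathfrak{e}_k$ is exactly the condition that the discriminant $(n-2)^2-8\mathfrak{e}_k$ of the linearization of \eqref{sys2} at its rest point be negative, i.e.\ that the eigenvalues be complex; by the preceding lemma this is the spiral regime $3\le n\le k_0$, so $u_0$ is unstable there.

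For (ii), with $n\ge k_0+1$, the goal is $E(u)\ge E(u_0)$ for every $u\in\mathfrak C$. I would write $u=(v,\phi)$ with $v\in\R^{m}$ and $\phi=u^{m+1}$, so that $\phi\in\mathring H^1_2(B^n)$ (it vanishes on $\partial B^n$) and $|v|^2=1-\phi^2$. With $\hat v=v/|v|:B^n\to\Sph^{m-1}$ a short computation yields the exact identity
\[
|\nabla u|^2=\frac{|\nabla\phi|^2}{1-\phi^2}+(1-\phi^2)\,|\nabla\hat v|^2,
\]
while $u_0$ has $\phi_0=0$ and $|\nabla u_0|^2=2\mathfrak{e}_k/r^2$, the reference eigenmap obeying $\Delta\hat v_0=-\tfrac{2\mathfrak{e}_k}{r^2}\hat v_0$. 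Discarding the harmless factor $(1-\phi^2)^{-1}\ge1$ and invoking the weighted comparison estimate
\[
\int_{B^n}(1-\phi^2)\Bigl(|\nabla\hat v|^2-\tfrac{2\mathfrak{e}_k}{r^2}\Bigr)\,dx\ge0,
\]
one is left with $E(u)-E(u_0)\ge\tfrac12\bigl(\int_{B^n}|\nabla\phi|^2-2\mathfrak{e}_k\int_{B^n}\phi^2/r^2\bigr)$, which is nonnegative by the $n$-dimensional Hardy inequality $\int_{B^n}|\nabla\phi|^2\ge\tfrac{(n-2)^2}{4}\int_{B^n}\phi^2/r^2$ precisely because $(n-2)^2\ge8\mathfrak{e}_k$ in this regime. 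This gives $E(u)\ge E(u_0)$, so $u_0$ minimizes on $\mathfrak C$.

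The main obstacle is the displayed weighted inequality $\int_{B^n}(1-\phi^2)(|\nabla\hat v|^2-2\mathfrak{e}_k/r^2)\,dx\ge0$. It cannot come from a pointwise bound $|\nabla\hat v|^2\ge2\mathfrak{e}_k/r^2$, which fails wherever a competitor is tangentially flat; the weight $1-\phi^2$ genuinely couples the tangential energy of $\hat v$ to the deviation $\phi$, so the estimate is global and must exploit the harmonicity of $\hat v_0$ together with the eigenmap equation, exactly as in the comparison argument of \cite{jk}. Care is also needed at the borderline of Hardy, where the extremal is attained only in a limit, so that the competitor's weighted integrals must be shown to converge near $r=0$, and in justifying the reduction to $\phi\in\mathring H^1_2$ for general maps in $H^1_2(B^n,\Sph^m)$. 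The consistency of the two parts is reassuring: the single threshold $(n-2)^2=8\mathfrak{e}_k$ separates the spiral case, where an equivariant variation produces instability, from the node case, where the same constant makes Hardy just strong enough to force minimality.
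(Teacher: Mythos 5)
The paper itself omits the proof, deferring to [jk, Theorem~2], so I am comparing your proposal against that argument. Your part~(i) is correct and is essentially the intended one: the variation $u_s=(f_{\mathfrak{e}_k}(\theta)\cos(s\eta),-\sin(s\eta))$ is admissible, the second variation is the quadratic form $\int_0^1(\zeta'^2r^{n-1}-2\mathfrak{e}_k\zeta^2r^{n-3})\,dr$, and the sharp weighted Hardy constant $\tfrac{(n-2)^2}{4}$ shows this can be made negative exactly when $(n-2)^2<8\mathfrak{e}_k$, which is precisely the condition that the linearization of \eqref{sys2} at the origin has complex eigenvalues, i.e.\ the spiral regime of the Lemma. (You are also right to read (ii) as carrying the implicit hypothesis $n\ge k_0+1$.)

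Part~(ii), however, has a genuine gap, and you have located it yourself: the inequality $\int_{B^n}(1-\phi^2)\bigl(|\nabla\hat v|^2-2\mathfrak{e}_k/r^2\bigr)\,dx\ge0$ is not a technical lemma but essentially the statement that the cone $x\mapsto f_{\mathfrak{e}_k}(x/|x|)$ minimizes energy among $\Sph^{m-1}$-valued maps with its boundary values. Even in the unweighted case $\phi\equiv0$ and $k=1$, $f=\mathrm{id}$, this is the Brezis--Coron--Lieb/Lin theorem, which is independent of the Hardy mechanism, holds for all $n\ge3$, and therefore cannot be the source of the dimension restriction $n\ge k_0+1$; for general eigenmaps it is not available. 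The argument of \cite{jk} avoids this entirely by expanding \emph{exactly} around $u_0=(v_0,0)$, $v_0(x)=f_{\mathfrak{e}_k}(x/|x|)$: writing $u=(v,\phi)$, $w=v-v_0\in\mathring H^1_2$, the componentwise identity $\Delta v_0=-\tfrac{2\mathfrak{e}_k}{r^2}v_0$ together with the constraint $|v|^2=1-\phi^2$, which gives $\langle v_0,w\rangle=\langle v_0,v\rangle-1=-\tfrac12(|w|^2+\phi^2)$, yields after one integration by parts the exact formula
\begin{equation*}
E(u)-E(u_0)=\tfrac12\int_{B^n}\Bigl(|\nabla w|^2+|\nabla\phi|^2-\tfrac{2\mathfrak{e}_k}{r^2}\bigl(|w|^2+\phi^2\bigr)\Bigr)\,dx .
\end{equation*}
Applying the scalar Hardy inequality to each component of $w$ and to $\phi$ gives $E(u)\ge E(u_0)$ as soon as $\tfrac{(n-2)^2}{4}\ge2\mathfrak{e}_k$, i.e.\ $n\ge k_0+1$, with no statement about $\hat v$ needed. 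I recommend replacing your decomposition in (ii) by this expansion; note also that it reproves (i), since taking $w=0$ and $\phi$ a truncated Hardy extremal makes the right-hand side negative when $(n-2)^2<8\mathfrak{e}_k$.
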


The proof is omitted since
one can easily adapt the proof of Theorem\,2, to the present situation.

\subsection{Twisted harmonic maps from $B^{n}$ into $\Sph^{2m}$}
The aim of this section is to construct further harmonic maps from balls to spheres, namely to prove Theorem\,B.
In order to do so we relax the conditions on the maps $u$, i.e.
we consider maps from balls to spheres of even dimension which are 
of a more general form than the maps considered in \cref{sec1}.
It will become clear below why our construction just works in the case where the target manifold is an even dimensional sphere.
The harmonic maps constructed in this section are contained in $H^{1}_{2}(B^{n},\R^{2m+1})$, i.e. they are not necessarily smooth.

\smallskip

In what follows we denote by $D_t$ the rotational $2\times 2$ matrix
\begin{align*}
D_t=\left(\begin{smallmatrix}
\cos t&-\sin t\\
\sin t&\cos t
\end{smallmatrix} \right).
\end{align*}
and let $R_t\in\mbox{Mat}(2m,\R)$ be a rotational block matrix with blocks of size $2\times 2$, whose entries are $D_t$ on the diagonal and $0_2$ everywhere else.
Below we examine the harmonicity of the maps $u:B^{n}\rightarrow\Sph^{2m}$ given by
\begin{align*}
u(x)=\left(\begin{smallmatrix}
R_{g(r)}f_{\frak{e}_k}(x/r)\sin(\Phi(r))\\
\cos(\Phi(r))
\end{smallmatrix} \right),
\end{align*}
where $r=\lvert x\lvert$ and $f_{\frak{e}_k}:\Sph^{n-1}\rightarrow\Sph^{2m-1}$ is a eigenmap with energy density $\frak{e}_k=k(k+n-2)/2$.
For the special case $g\equiv 0$, these maps obviously reduce to maps considered in \cref{sec1}.

\smallskip

The energy of the map $u$ is given by
\begin{align*}
E(u)=\tfrac{\omega_{n}}{2}\int_0^1\left(\Phi'(r)^2+(\tfrac{2\frak{e}_k}{r^2}+g'(r)^2)\sin^2\Phi(r)\right)r^{n-1}dr.
\end{align*}
The associated Euler Lagrange equation reads
\begin{align}
\label{el-2}
\Phi^{''}(r)+(n-1)\tfrac{\Phi'(r)}{r}-\frak{e}_k\tfrac{\sin(2\Phi(r))}{r^2}-\tfrac{\sin(2\Phi(r))}{2}g'(r)^2=0.
\end{align}
In terms of $t=\log r$ this equation reads
\begin{align}
\label{el-3}
\Phi^{''}(t)+(n-2)\Phi'(t)-\frak{e}_k\sin(2\Phi(t))-\sin(2\Phi(t))g'(t)^2=0.
\end{align}
In comparison with equation (\ref{ode_psi}), the additional term $\sin(2\Phi(t))g'(t)^2$ comes in.
Note however, that we can choose $g\in C^{\infty}(\R)$ arbitrarily!
If $g$ is constant, this equation reduces to (\ref{ode_psi}).
If $g$ is linear, (\ref{el-3}) is autonomous and the methods from \cref{sec1} apply.
For all remaining possible choices of $g$, the ODE (\ref{el-2}) is not autonomous and thus the machinery from \cite{jk} cannot be used. 

\smallskip

In this manuscript we restrict ourself to the case where $g$ is a linear function (in $t$).
Note that $g(t)=ct$ transforms into $$g(r)=c\ln(r).$$
However, for this choice of $g$, $u(0)$ is not determined. 
Hence we introduce the function $\hat{u}$, which we set to be equal to $u$ on $B^{n}\setminus\{0\}$ and equal to some (arbitrary) $u_0\in\Sph^{m+1}$ at $x=0$.
First we show that $\hat{u}$ is contained in $H^{1}_{2}(B^{n},\R^{2m+1})$.

\begin{lemma}
If $g(r)=c\ln(r)$, we have $\hat{u}\in H^{1}_{2}(B^{n},\R^{2m+1})$ for $n\geq 3$.
\end{lemma}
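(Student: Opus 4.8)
The plan is to verify membership in $H^1_2(B^n,\R^{2m+1})$ by the definition, namely by showing that $\hat u$ is square-integrable and that its weak gradient is square-integrable over $B^n$. Since $\hat u$ takes values in $\Sph^{2m}$, it is automatically bounded, so $\hat u\in L^2(B^n,\R^{2m+1})$ is immediate and the only real content is the finiteness of the Dirichlet energy together with the fact that the classical derivative on $B^n\setminus\{0\}$ is in fact the weak derivative on all of $B^n$. I would work directly with the energy formula displayed just before the lemma, specializing to $\Phi$ a fixed admissible solution of $(\ref{el-2})$ and to $g(r)=c\ln r$, so that $g'(r)=c/r$.

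First I would compute the energy integrand explicitly. Substituting $g'(r)^2=c^2/r^2$ into the energy formula gives
\begin{align*}
E(\hat u)=\tfrac{\omega_n}{2}\int_0^1\left(\Phi'(r)^2+\bigl(\tfrac{2\frak{e}_k}{r^2}+\tfrac{c^2}{r^2}\bigr)\sin^2\Phi(r)\right)r^{n-1}\,dr.
\end{align*}
The terms $\Phi'(r)^2 r^{n-1}$ cause no trouble near $r=0$ for an admissible $\Phi$ (recall $\Phi(0)=0$ with $\Phi'(0)$ finite, so $\Phi'(r)^2 r^{n-1}\to 0$). The potentially singular contribution is the middle term, where I would use $\sin^2\Phi(r)\le\Phi(r)^2$ together with the behaviour $\Phi(r)\sim c_0 r^k$ as $r\to 0$ coming from the boundary condition $\Phi(0)=0$ analysed in \cref{sec1}. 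Then $\tfrac{1}{r^2}\sin^2\Phi(r)\,r^{n-1}$ is comparable to $r^{2k-2+n-1}=r^{n+2k-3}$ near the origin, which is integrable precisely because $n\ge 3$ and $k\ge 1$ make the exponent exceed $-1$. Thus $E(\hat u)<\infty$, and in particular the classical gradient $\nabla\hat u$ lies in $L^2(B^n\setminus\{0\})$.

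The main obstacle is not the size estimate but the justification that $\hat u$, defined separately at the single point $x=0$, actually possesses this $L^2$ function as its \emph{weak} gradient across the origin; in other words, one must check that no singular (distributional) contribution is concentrated at $\{0\}$. The standard device here is that a single point has zero capacity in dimension $n\ge 3$: a bounded function whose classical gradient is in $L^2(B^n\setminus\{0\})$ and which extends to $L^\infty(B^n)$ is automatically in $H^1_2(B^n)$, with weak gradient equal to the classical one almost everywhere. I would make this precise by integrating against a test function $\eta\in C_c^\infty(B^n)$, excising a small ball $B_\varepsilon(0)$, integrating by parts on $B^n\setminus B_\varepsilon(0)$, and controlling the boundary term on $\partial B_\varepsilon(0)$: since $\hat u$ is bounded and the surface area of $\partial B_\varepsilon(0)$ is of order $\varepsilon^{n-1}\to 0$ for $n\ge 3$, this boundary term vanishes as $\varepsilon\to 0$, yielding the weak-derivative identity on all of $B^n$. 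Combining the integrability estimate with this removable-singularity argument gives $\hat u\in H^1_2(B^n,\R^{2m+1})$, and the hypothesis $n\ge 3$ is used in both the integrability exponent and the vanishing of the boundary term, explaining why the statement is restricted to that range.
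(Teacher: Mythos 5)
Your argument is correct and follows essentially the same route as the paper: boundedness gives the $L^2$ bound, the classical gradient is shown to be square-integrable away from the origin (this is where $n\ge 3$ enters), and the excision/integration-by-parts argument with a boundary term of order $\epsilon^{n-1}$ shows that the classical derivative is the weak derivative across the origin. The only difference is cosmetic: the paper works component-wise and bounds the singular term by $\lvert\sin(g(r))\rvert\le 1$ and $\lvert f_{\frak{e}_k}\rvert\le 1$, reducing everything to $\int_{B^n}r^{-2}\,dx<\infty$ for $n\ge 3$, so it never needs the asymptotics $\Phi(r)\sim c_0 r^k$ that you invoke (which are true for the relevant solutions but superfluous here).
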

\begin{proof}
Below let $\Omega=B^n_{\epsilon}\subset\R^m$ be the open ball centered at the origin with radius $\epsilon$.
The ball of radius $\epsilon=1$ will still be denoted by $B^n$. 
For the sake of readability we will write $g(r)$ instead of $c\ln(r)$ throughout the proof.
We indicate the step in which we use the special form of $g$.

\smallskip

We show that each component $\hat{u}_i$ of $\hat{u}$ is contained in $H^{1},_{2}(B^{n})$.
Since $H^{1}_{2}(B^{n})$ is a vector space, it is sufficient to show that for each $j\in\{1,\cdots, 2n\}$ the functions 
\begin{align*}
h(x)=\cos(g(r))f_{\frak{e}_k}(x/r)_j\quad\mbox{and}\quad i(x)=\sin(g(r))f_{\frak{e}_k}(x/r)_j 
\end{align*} 
are in $H^{1}_{2}(B^{n})$.
Here $f_{\frak{e}_k}(x/r)_j$ denotes the $j$-th component of $f_{\frak{e}_k}(x/r)$.
In what follows we restrict ourselves to proving the desired result for $h$; the considerations for $i$ are analogous.

\smallskip

Note that $h\in L^2(B^n)$.
Indeed, we have
\begin{align*}
\int_{B^n}h(x)^2dx=\int_{B^n}\cos(g(r))^2{f_{\frak{e}_k}(x/r)_j}^2dx\leq \int_{B^n}dx<\infty.
\end{align*}
Outside the origin, the partial derivatives of $h$ are computed as
\begin{align}
\label{weak}
\tfrac{\partial}{\partial x_i}(\cos(g(r)){f_{\frak{e}_k}(x/r)_j})=&-\tfrac{\sin(g(r))}{r^2}x_i{f_{\frak{e}_k}(x/r)_j}\\\notag&+\sum_{\ell=1}^n\tfrac{\partial {f_{\frak{e}_k}(y)_j}}{\partial y_{\ell}}(\tfrac{1}{r}\delta_{\ell,i}-\tfrac{x_ix_{\ell}}{r^3})\cos(g(r)),
\end{align}
where $y=x/r$.
Note that in the preceding formula we made use of the special form of $g$.
Next we prove that (\ref{weak}) defines the weak derivatives of $h$ on the entire domain $B^n$.

\smallskip

Let $\eta\in C^{\infty}_c(B^n)$.
For any $\epsilon>0$ the Theorem of Gauss yields 
 \begin{align}
 \label{gauss}
 \int_{\epsilon<\lvert x\lvert <1}\tfrac{\partial}{\partial x_i}(h\eta)dx=\int_{\lvert x\lvert=\epsilon}h(x)\eta(x)\nu_i(x)dS,
 \end{align}
 where $dS$ is the $(n-1)$-dimensional measure on the surface of $B(0,\epsilon)$, and $\nu_i$ is such that $\nu=(\nu_1,\cdots,\nu_n)$
 is the unit normal pointing toward the interior of this ball.
 Furthermore, since $\eta$ is bounded and continuous, we have
 \begin{align*}
\int_{\lvert x\lvert=\epsilon}\lvert h(x)\eta(x)\nu_i(x)\lvert dS\leq c\lvert\lvert\eta\lvert\lvert_{\infty}\omega_{n}\epsilon^{n-1}.
 \end{align*} 
 The right hand side converges to $0$ as $\epsilon$ converges to $0$.
 Hence, from (\ref{gauss}) we have
 \begin{align*}
  \int_{B^n}\cos(g(r)){f_{\frak{e}_k}(x/r)_j}\tfrac{\partial \eta }{\partial x_i}dx= -\int_{B^n}\tfrac{\partial h}{\partial x_i}\eta dx,
 \end{align*}
 i.e. (\ref{weak}) defines the weak derivatives of $h$ on the entire domain $B^n$.
 
 \smallskip
 
It remains to show that the derivatives (\ref{weak}) are contained in $L^2(B^n)$.
We show that $\tfrac{\sin(g(r))}{r^2}x_i{f_{\frak{e}_k}(x/r)_j}$ is contained in this Lebesgue space, the remaining  summands are treated similarly.
 Clearly,
  \begin{align*}
  \int_{B^n}\tfrac{\sin^2(g(r))}{r^4}x_i^2{f_{\frak{e}_k}(x/r)_j}^2dx\leq  \int_{B^n}\tfrac{1}{r^2}dx=\omega_{n}\int_{0}^1r^{n-3}dr<\infty
 \end{align*}
 if and only if $n-3>-1$. Since $n\in\N$, the later condition is equivalent to $n\geq 3$.
\end{proof}

We can now show the existence of infinitely many harmonic maps $u:B^{n}\rightarrow\Sph^{2m}$ in $H^{1}_{2}(B^{n},\R^{2m+1})$ for every $n\geq 3$, i.e. Theorem\, B of the introduction.

\begin{thmb}
Let eigenmaps $f_{\frak{e}_k}:\Sph^{n-1}\rightarrow\Sph^{2m-1}$ with energy density $\frak{e}_k=k(k+n-2)/2$, $k\in\N$, be given.
 Then there exist infinitely many harmonic maps $u:B^{n}\rightarrow\Sph^{2m}$  in $H^{1}_{2}(B^{n},\R^{2m+1})$ for every $n\geq 3$.
\end{thmb}
\begin{proof}
Since the proof works as the proof of Theorem\,\ref{ball_sphere}, we omit some details.
Plugging $g(t)=ct$ in (\ref{el-3}) this equation becomes
\begin{align*}
\Psi^{''}(t)+(2n-2)\Psi'(t)-((2n-1)+c^2)\tfrac{\sin(2\Psi(t))}{2}=0.
\end{align*}
Following \cite{jk} we introduce
$q(t)=2\Psi(t)-\pi$ and $p(t)=q'(t)$ and thus obtain the following system
\begin{align*}
\left(\begin{smallmatrix}
q'(t)\\ 
p'(t)
\end{smallmatrix} \right)=\left(\begin{smallmatrix}
p(t)\\ 
-(2n-2)p(t)-((2n-1)+c^2)\sin(q)
\end{smallmatrix} \right)=:V(q,p).
\end{align*}
The singular points of the vector field $V$ are given by $p_1=(0,0)$ and $p_2=(-\pi,0)$.
A straightforward calculation yields
\begin{align*}
DV_{p_i}=\left(\begin{smallmatrix}
0&1\\ 
\pm((2n-1)+c^2)&-(2n-2)
\end{smallmatrix} \right),
\end{align*}
where the plus sign occurs for $i=2$ and the minus sign for $i=1$.
For $i=1$ the eigenvalues of this matrix are given by
\begin{align*}
\lambda_{1,2}=-(n-1)\pm\sqrt{(n-2)^2-2-c^2}.
\end{align*}
Clearly, for given $n$, one can choose $c$ such that these eigenvalues have negative real part and non-vanishing imaginary part.
Thus $p_1$ is a focal point, which again implies the existence of infinitely many harmonic maps $u:B^{2n}\rightarrow\Sph^{2n}$.
\\
For $i=2$ the eigenvalues of this matrix are given by
\begin{align*}
\lambda_{1,2}=1-n\pm\sqrt{n^2+c^2}.
\end{align*}
Both of them are real numbers, one is positive, the other negative. Thus $p_2$ is a saddle point.
By going along the lines of the proof of \cite{jk} we finish the proof.
\end{proof}

It is now possible to construct twisted harmonic maps solving a Dirichlet problem.
Due to the analogy with the previous section we forgo this here.

\bibliography{mybibfile}

\begin{thebibliography}{GWZ}
\bibitem{BC}
P.~Bizon, T.~Chmaj, {\em Harmonic map between spheres}, Proc.\ Royal\ Soc.\ London\ Ser.\ A {\bf 453} (1997), 403--415.
%
\bibitem{wc}
K.~Corlette, R.~Wald, {\em Morse theory and infinite families of harmonic maps between spheres},  Comm.\ Math.\ Phys. {\bf 215} (2001), 591--608. 
%
\bibitem{ding}
W.~Ding, {\em Symmetric harmonic maps between spheres}, Comm.\ Math.\ Phys. {\bf 118} (1988), 641--649.
%
\bibitem{eells1}
J.~Eells, L.~Lemaire, {\em A report on harmonic maps}, Bull.\ London\ Math.\ Soc.\ {\bf 10} (1978), 1-68.
%
\bibitem{eells2}
J.~Eells, L.~Lemaire, {\em Another report on harmonic maps}, Bull.\ London\ Math.\ Soc.\ {\bf 20} (1988), 385--524.
%
\bibitem{er}
J.~Eells, A.~Ratto, {\em Harmonic maps and minimal immersions with symmetries}, Ann.\ of \ Math.\ Studies {\bf 130} (1993).
%
\bibitem{es}
J.~Eells, J.~Sampson, {\em Harmonic mappings of Riemannian manifolds}, Amer.\ J.\ Math. {\bf 86} (1964), 109--160.
%
\bibitem{gastel}
A.~Gastel, {\em Torus equivariant harmonic maps between spheres}, Topology {\bf 41} (2002), 213-227.
%
\bibitem{ga}
A.~Gastel, {\em On the harmonic Hopf construction}, Proc.\ Amer.\ Math.\ Soc. {\bf 132} (2003), 607-615.
%
\bibitem{hamaps}
F.~H\'elein, J.C.~Wood, {\em Harmonic maps}, Handbook of global analysis {\bf 1213} (2008), 417-491.
%
\bibitem{jk}
W. ~J\"ager, H. ~Kaul, {\em Rotationally symmetric harmonic maps from a ball into a sphere and the regularity problem for weak solutions of elliptic systems},
J.\ Reine\ Angew.\ Math. {\bf 343} (1983), 146-161.
%
\bibitem{mor0}
S. ~Montaldo, C. ~Oniciuc, A. ~Ratto, {\em Rotationally symmetric biharmonic maps between models},  J. \ Math.\ Anal.\ Appl. {\bf 431} (2015), 494--508.
%
\bibitem{mor}
S. ~Montaldo, C. ~Oniciuc, A. ~Ratto, {\em Reduction methods for the bienergy},  Rev.\ Roumaine\ Math.\ Pures\ Appl. {\bf 61} (2016), 261--292.
%
\bibitem{pr}
V. ~Pettinati, A. ~Ratto, {\em Existence and nonexistence results for harmonic maps between spheres},
Ann.\ Scuola\ Norm.\ Sup.\ Pisa\ Cl. \ Sci. \ (4)  {\bf 17} (1990), 273-282.
%
\bibitem{ps}
T.~P\"uttmann, A.~Siffert, {\em Harmonic self-maps of cohomogeneity one manifolds}, arXiv:1608.08669.
%
\bibitem{ratto}
A. ~Ratto, {\em Harmonic maps of spheres and the Hopf construction},
Topology  {\bf 28} (1989), 379-388.
%
\bibitem{siffert}
A.~Siffert, {\em Infinite families of harmonic self-maps of spheres}, Journal of Differential Equations {\bf 260} (2016), 2898--2925.
%
\bibitem{su3}
A.~Siffert, {\em Harmonic self-maps of $\mbox{SU}(3)$}, to appear in the Journal of Geometric Analysis.
%
\bibitem{smith}
R.\,T.~Smith, {\em Harmonic mappings of spheres},
Amer.\ J.\ Math. {\bf 97} (1975), 364-395.
%
\bibitem{urakawa}
H.~Urakawa, {\em Equivariant harmonic maps between compact Riemannian manifolds of cohomogeneity one},
Michigan\ Math.\ J.\ {\bf 40} (1993), 27-51. 
%


\end{thebibliography}

\nocite{*}

\end{document}